\documentclass[11pt,reqno]{amsart}

\usepackage[T1]{fontenc}
\usepackage{lmodern}
\usepackage{microtype}
\usepackage[margin=1.08in]{geometry}
\usepackage{amsmath,amssymb,amsthm,mathtools}
\usepackage{enumitem}
\usepackage{xcolor}
\usepackage{hyperref}
\usepackage[nameinlink,capitalise,noabbrev]{cleveref}

\definecolor{reviewviolet}{RGB}{120,38,165}

\hypersetup{
  colorlinks=true,
  linkcolor=blue!45!black,
  citecolor=blue!45!black,
  urlcolor=blue!55!black,
  pdftitle={Bounded Killing fields and instability of Ricci-flat four-manifolds},
  pdfauthor={Tristan Ozuch}
}

\allowdisplaybreaks[2]
\setlist{itemsep=2pt,topsep=5pt}

\newtheorem{theorem}{Theorem}[section]
\newtheorem{quest}[theorem]{Question}
\newtheorem{proposition}[theorem]{Proposition}
\newtheorem{lemma}[theorem]{Lemma}
\newtheorem{corollary}[theorem]{Corollary}
\theoremstyle{definition}

\theoremstyle{remark}
\newtheorem{remark}[theorem]{Remark}

\newcommand{\R}{\mathbb R}
\newcommand{\Z}{\mathbb Z}

\newcommand{\cE}{\mathcal E}
\newcommand{\cQ}{\mathcal Q}
\newcommand{\Rm}{\operatorname{Rm}}
\newcommand{\Ric}{\operatorname{Ric}}
\newcommand{\tr}{\operatorname{tr}}

\title{Stability of gravitational instantons with a bounded Killing vector field}

\author{Tristan Ozuch}
\date{}

\begin{document}

\begin{abstract}
We prove that a complete ALF Ricci-flat 
$4$-manifold carrying a bounded Killing vector field is linearly stable if and only if it is locally hyperkähler. This applies uniformly to all known examples and in particular proves the instability of all of the metrics recently found by Li-Sun. The argument is based on infinitesimal Einstein-Maxwell deformations of the Ricci-flat metric associated with the Killing field.  

A separate simpler identity proves the instability of nonflat static Ricci-flat 4-metrics which include the static axisymmetric smooth Riemannian Myers/Korotkin-Nicolai metrics. 

To our knowledge, this proves the equivalence between linear stability and special holonomy and (anti-)selfduality for all known families of Ricci-flat $4$-manifolds. 

In higher dimensions, for generalizations of AF manifolds, we show that stability forces the universal cover to split a line. This gives explicit destabilizing tensors on the Riemannian Myers-Perry instantons and the Riemannian Schwarzschild-Tangherlini metrics in all dimensions.
\end{abstract}

\maketitle

\section{Introduction}
\label{sec:introduction}
Gravitational instantons were introduced by Hawking in Euclidean quantum gravity as nonsingular Riemannian solutions of the vacuum Einstein equations (i.e. Ricci-flat) in dimension $4$ and higher whose curvature decays at infinity or lies in $L^2$, by analogy with Yang-Mills instantons \cite{HawkingGI}. Their stability is a natural geometric, dynamical and physical question. This includes the Euclidean Schwarzschild instability in \cite{GrossPerryYaffe}. The stability of Riemannian metrics also governs the stability of some Lorentzian metrics as studied in \cite{GibbonsHartnoll,GibbonsHartnollPope}. In a number of works mentioned below, nonlinear (un)stable behaviors are deduced from the linear stability problem, which we focus on in this article. It is defined in the next section.

Ricci-flat metrics with a parallel spinor are semistable \cite{DaiWangWei} (in an adapted sense in the noncompact setting), and this is also true for locally hyperkähler metrics. Motivated by the relation between parallel spinors, special holonomy, and supersymmetric compactifications, Acharya conjectured that every stable Ricci-flat metric on a compact simply connected manifold has special holonomy \cite[Conjecture~2]{Acharya}. The same question for noncompact Ricci-flat metrics is central. 
\begin{quest}
    Must a semistable gravitational instanton $4$-manifold be locally hyperkähler?
\end{quest}
The main \cref{thm:intro-bounded} below gives a partial answer: for complete ALF Ricci-flat metrics carrying a bounded Killing field, which is satisfied by all known examples, stability forces a local hyperkähler holonomy reduction. 
\\

The hyperkähler subclass provides many of the known examples of gravitational instantons. The Gibbons-Hawking ansatz gives the fundamental circle-invariant multi-center metrics \cite{GibbonsHawkingMulti}, while Kronheimer classified the ALE hyperkähler spaces \cite{KronheimerALE,KronheimerTorelli}. Minerbe proved that hyperkähler gravitational instantons with cubic volume growth are ALF \cite{MinerbeAsymptotic} and that the cyclic ALF spaces are precisely the flat product and the multi-Taub-NUT metrics \cite{MinerbeMultiTN} while Chen-Chen completed the corresponding classification in the dihedral case \cite{ChenChenII}. More generally, Sun-Zhang \cite[Theorem~1.2]{SunZhang} proved that every nonflat finite-energy hyperkähler four-manifold is asymptotic to one of the six model types: ALE, ALF, ALG, ALG$^*$, ALH, ALH$^*$.

The Chen-Teo family, which disproved the classical conjecture that every nonflat AF gravitational instanton belongs to the Riemannian Kerr family \cite{ChenTeo}, became a catalyst for the recent study of gravitational instantons after Aksteiner and Andersson recognized it as Hermitian and conformally Kähler \cite{AksteinerAndersson}.  Biquard-Gauduchon classified toric Hermitian ALF instantons \cite{BiquardGauduchon}, M.~Li gave a classification of all Hermitian gravitational instantons \cite{MingyangLi}, and Andersson-Araneda established their infinitesimal rigidity \cite{AnderssonAraneda} based on a result of Biquard-Gauduchon-LeBrun \cite{BiquardGauduchonLeBrun}. Aksteiner-Andersson-Dahl-Nilsson-Simon additionally established global rigidity for gravitational instantons with a circle symmetry \cite{AADSNS}: they must be Hermitian in a number of situations. O. Biquard and the author proved that Hermitian gravitational instantons were unstable in \cite{BO} using Delay's conformal Laplacian from \cite{DelayConformalLaplacian}.
 
 Beyond the Hermitian world, in a recent breakthrough, Li-Sun constructed a remarkable family of complete toric $\mathrm{AF}_\beta$ metrics with arbitrary second Betti number, whose members with $b_2\geqslant3$ are not locally Hermitian in either orientation \cite{LS}.  Another lesser known family of metrics is given by the Riemannian versions of the static axisymmetric metrics of Myers and Korotkin-Nicolai, \cite{Myers,KorotkinNicolai}. These new examples motivate the search for a stability criterion that treats non-Hermitian instantons and this is the purpose of this article.

\subsection{Stability of gravitational instantons}

Let $(M^4,g)$ be an oriented Ricci-flat manifold.  On symmetric two-tensors we work with the following Lichnerowicz operator
\begin{equation}\label{eq:E-intro}
 \cE_g h=\nabla^*\nabla h-2\mathring{\Rm}_g h
\end{equation}
where for a symmetric two-tensor $h$, we use $(\mathring{\Rm}h)_{ij}=R_{ikjl}h^{kl}$, and its quadratic form
\begin{equation}\label{eq:Q-intro}
 \cQ_g(h)=\int_M\langle\cE_gh,h\rangle\,dv_g
 =\int_M\bigl(|\nabla h|^2-2\langle\mathring{\Rm}_gh,h\rangle\bigr)\,dv_g.
\end{equation}
A symmetric tensor is \emph{transverse-traceless} (TT) if
$\tr_gh=0=\delta_gh$.  On a noncompact manifold we call $g$
\emph{(linearly) unstable} if there is a smooth TT tensor $h$ such that $\cQ_g(h)$ is finite and negative.  This is at the same time: 
\begin{itemize}
    \item the standard variational instability for the Yamabe problem,
\item the variational notion underlying dynamical stability and instability
under the Ricci flow (see
\cite{GuentherIsenbergKnopf,Sesum,Haslhofer,HaslhoferMuller}
for compact metrics,
\cite{DeruelleKroncke,DeruelleOzuchLojasiewicz,
DeruelleOzuchDynamical,KronckePetersen,StolarskiWaldron}
for ALE metrics, and
\cite{KO}
for ALF metrics; related ALE/AE applications appear in
\cite{DeruelleOzuchOrbifold,LopezOzuch,
BaldaufOzuchMass,KronckePetersen,BaldaufOzuchEnergy}), and

    \item the condition that allows perturbations increasing scalar curvature, whose global consequences include
mass-decreasing scalar-flat perturbations \cite{DahlKroncke}.
\end{itemize}
 Thanks to these nonlinear instability consequences, we will often loosely use ``unstable'' in place of ``linearly unstable.'' In the present paper, assuming that a Ricci-flat ALF metric admits a bounded Killing vector field, we show that it is stable if and only if it is locally hyperkähler.

\subsection{Stability and holonomy reduction with a bounded Killing vector field}

In this article, we work within the broad ALF definition of \cite[Definition~1.1]{BiquardGauduchon}, which include in particular the $\mathrm{AF}_\beta$ end of \cite[Definition~4.30]{LS} for all $\beta\in \mathbb R$. In the notation of Biquard-Gauduchon \cite{BiquardGauduchon}, the general model is $$b=dr^2+r^2\gamma+\eta^2,$$ where $\eta$ represent the $1$-form associated with the unit-length fiber at infinity and $\gamma$ is a round $2$-metric. Still in their notations, $T$ is the asymptotic Killing vector field satisfying $\eta(T)=1$ and $ \iota_Td\eta=0$. The precise definition of $(L,\eta,T,\gamma)$ are recalled in \cref{sec:end-analysis}. We will consider metrics with the following condition satisfied by all known examples:
\begin{equation}\label{eq:intro-end}
 |\nabla_b^j(g-b)|_b=\mathcal{O}(r^{-1-j})\qquad \text{ for } \quad j\geqslant0.
\end{equation}

\begin{theorem}\label{thm:intro-bounded}
Let $(M^4,g)$ be complete, oriented ALF Ricci-flat $4$-manifold. Suppose that $g$ carries a nonzero bounded Killing field.  Then we have the following equivalence
\begin{equation}\label{eq:intro-dichotomy}
 g\text{ is linearly stable} \iff W_g^+=0\text{ or }W_g^-=0.
\end{equation}
\end{theorem}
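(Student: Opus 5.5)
The direction ``$\Leftarrow$'' is the easy one and I treat it first. If $W^+_g=0$ (or $W^-_g=0$), then since $g$ is Ricci-flat the curvature operator on $\Lambda^+$ (resp.\ $\Lambda^-$) vanishes, so $\Lambda^\pm$ is flat and the metric is locally hyperkähler. On the universal cover there are parallel spinors, and I would use the Dai-Wang-Wei argument adapted to the noncompact setting. A TT tensor $h$ is identified, through the parallel spinor, with a twisted harmonic-spinor type object. Equivalently, $h$ becomes a $\Lambda^\mp$-valued one-form $\omega$ via $h\mapsto h\circ J_i$, and the Weitzenböck identity gives
\begin{equation*}
\cQ_g(h)=\int_M|d_\nabla\omega|^2+|d^*_\nabla\omega|^2\,dv_g\geqslant0 .
\end{equation*}
On the noncompact ALF manifold the integration by parts is justified by the finiteness of $\cQ_g(h)$, a cutoff argument, and the decay \eqref{eq:intro-end}. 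If the hyperkähler structure exists only on a finite cover, I pull $h$ back to that cover; that is harmless, because $\cQ$ simply scales.

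For ``$\Rightarrow$'', assume $W^+\neq0$ and $W^-\neq0$, and let $X$ be the bounded Killing field, with dual $1$-form $\xi=X^\flat$. The idea is to view $X$ as generating an infinitesimal Einstein-Maxwell deformation. On a Ricci-flat manifold, $F=d\xi$ is harmonic, because $\Delta\xi=2\Ric(\xi)=0$ for Killing fields. Its self-dual and anti-self-dual parts $F^\pm$ satisfy $\nabla^*\nabla F^\pm=-2W^\pm(F^\pm)$, up to the usual normalisation of the Weitzenböck formula on $\Lambda^\pm$. From $F^+$ and $F^-$ I form the traceless symmetric tensor
\begin{equation*}
h=F^+\circ F^-,\qquad h_{ij}=F^+_{ik}F^{-}{}_{j}{}^{k}.
\end{equation*}
This is exactly the stress-energy tensor of the Maxwell field $F$, and it is automatically trace-free. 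Since $F$ solves Maxwell's equations, $h$ is divergence-free, so $h$ is TT. Note also that $h\equiv0$ precisely when $F^+=0$ or $F^-=0$ (away from isolated zeros).

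The core computation is then $\cE_gh$, using $\nabla^*\nabla(F^+\circ F^-)$, the Weitzenböck identities for $F^\pm$, and the algebraic action of $\mathring{\Rm}=W^++W^-$ on $\Lambda^+\otimes\Lambda^-\cong S^2_0$. I expect the result to be
\begin{equation*}
\cE_g h=-2\,\nabla F^+\ast\nabla F^- ,
\end{equation*}
with the curvature terms cancelling, since $\mathring{\Rm}$ acts on $S^2_0$ through $W^\pm$ exactly as the curvature terms from each factor. Pairing with $h$ and integrating, I aim to show that $\cQ_g(h)<0$, possibly after replacing $h$ by a suitable combination $h_t=F^+\circ F^-+(\text{correction})$. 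The Killing identity $\nabla\nabla\xi=\Rm(X,\cdot)$ should let me rewrite $\int\langle\nabla F^+\ast\nabla F^-,F^+\circ F^-\rangle$ as minus a sum of squares plus boundary terms.

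The ALF structure and the bound on $X$ come in when estimating decay. Because $X$ is bounded, it is asymptotic to a multiple of $T$ plus possibly a rotation, and boundedness rules out the rotational part growing linearly. Then $d\xi$ behaves like $d\eta$ plus $\mathcal O(r^{-2})$, so $F=\mathcal O(r^{-2})$ and $\nabla F=\mathcal O(r^{-3})$. This gives $h=\mathcal O(r^{-4})$, so $h\in L^2$ and $\nabla h\in L^2$ against volume growth $r^3$, all boundary terms vanish, and $\cQ_g(h)$ is finite.

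The remaining worry is that $F^\pm$ might vanish identically even though $W^\pm\neq0$. I would rule this out in two steps. If $F^-\equiv0$, then $F^+$ is a nonzero parallel-type self-dual harmonic form built from a Killing field. By the Biquard-Gauduchon and Derdziński-type arguments, $W^+$ is then degenerate and $g$ is Hermitian. That case is handled by the instability of Hermitian instantons in \cite{BO}. If $F\equiv0$, then $\xi$ is parallel and the cover splits a line, which forces $g$ to be flat. The main obstacle is the sign step: turning the Bochner-type expression for $\cQ_g(h)$ into a strictly negative quantity. If the bare product $F^+\circ F^-$ does not give it directly, I would first try a conformal weight $|X|^{-k}F^+\circ F^-$ and optimise $k$.
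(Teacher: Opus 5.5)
There is a genuine gap in the direction ``$\Rightarrow$''. (Your ``$\Leftarrow$'' argument, via flat $\Lambda^\pm$ and the twisted Weitzenböck formula, is fine in outline.) The sign step, which you flag as the main obstacle, is never supplied, and with your test tensor it cannot be expected.

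Your formula $\cE_g(F^+\circ F^-)=-2\sum_a(\nabla_aF^+)\circ(\nabla_aF^-)$ is correct; it is \cref{prop:harmonic-product} with $s=0$. Pairing it with $h=F^+\circ F^-$ gives
\[
 \cQ_g(F^+\circ F^-)=-\frac12\int_M\big\langle d|F^+|^2,\,d|F^-|^2\big\rangle\,dv_g ,
\]
and this has no sign in general. It equals $-\frac12\int|dq|^2$ when $|F^+|^2=|F^-|^2=q$, which is what happens in the static case $\kappa\wedge d\kappa=0$; that case is exactly \cref{thm:intro-static}. For stationary non-static instantons nothing forces a sign, and the Killing identity does not turn this integrand into a sum of squares.

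Your fallback also fails. A weight $f=|X|^{-k}$ destroys transversality, since $\delta(fh)=f\,\delta h-h(\nabla f,\cdot)$, and $(F^+\circ F^-)(\nabla V,\cdot)\neq0$ wherever $F^+$, $F^-$ and $dV$ are all nonzero. A symptom of the problem is that boundedness of $K$ enters your argument only through decay, whereas in a working proof it is the source of the sign.

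The missing idea is to test with the $\cE_g$-preimage of $F^+\circ F^-$ rather than with $F^+\circ F^-$ itself. This preimage is the infinitesimal Einstein-Maxwell deformation
\[
 k_K=\nabla^2u-\kappa\otimes\kappa+\tfrac{V+1}{2}g,\qquad \Delta u=V+2,
\]
which is TT and satisfies $\cE_gk_K=2F^+\circ F^-$. Then $\cQ_g(k_K)=\int\langle 2F^+\circ F^-,k_K\rangle$. The trace term drops, and the Hessian term integrates away against the divergence-free $F^+\circ F^-$. What remains is
\[
 -2\int(F^+\circ F^-)(K,K)=-\tfrac12\int\langle d(V-\chi),d(V+\chi)\rangle,\qquad d\chi=\iota_K(*F).
\]
The Ernst potentials satisfy $\Delta(V\mp\chi)=-2|F^\pm|^2$ and tend to $1$ at infinity, because $K\to T$ and $\chi\to0$; this is where the bounded Killing field is really used. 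The maximum principle then gives $1-V\mp\chi\geqslant0$, and Green's formula yields
\[
 \cQ_g(k_K)=-\int_M(1-V-\chi)|F^+|^2\,dv_g .
\]
By the strong maximum principle this is strictly negative when $F^\pm\not\equiv0$. Making this rigorous also requires solving for $u=-r^2/2+\mathcal O(r^{1+\varepsilon})$ on the ALF end so that $k_K=\mathcal O(r^{-1+\varepsilon})$, which your decay discussion does not cover.

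Your degenerate-case analysis is also misdirected. If $F^-\equiv0$, the Killing identity $\nabla_XF=2\,\mathcal R(X\wedge K)$ (up to convention), together with $\mathcal R=W^+\oplus W^-$ on a Ricci-flat manifold, gives $W^-=0$ on $\{K\neq0\}$ and hence everywhere (\cref{prop:chiral-half-flat}). It does not give ``$W^+$ degenerate, $g$ Hermitian''. So under your standing assumption $W^\pm\neq0$ both $F^\pm$ are automatically nonzero, and no detour through the Hermitian instability of \cite{BO} is needed.
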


This directly applies to the new metrics found by Li-Sun.

\begin{corollary}\label{cor:intro-LS}
All Li-Sun metrics \cite{LS} are unstable.
\end{corollary}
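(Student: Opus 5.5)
The statement to prove is \cref{cor:intro-LS}: all Li-Sun metrics are unstable. We deduce it from \cref{thm:intro-bounded} by checking its hypotheses on every member of the Li-Sun family and then showing that neither half of the Weyl tensor vanishes.

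\textbf{Step 1: the asymptotic hypotheses.} The Li-Sun metrics are complete, oriented, Ricci-flat and toric. Their ends are $\mathrm{AF}_\beta$ in the sense of \cite[Definition~4.30]{LS}, and we noted above that these fit the Biquard-Gauduchon ALF definition with model $b=dr^2+r^2\gamma+\eta^2$. We will also verify the decay \eqref{eq:intro-end}. For this we would read off, from the explicit construction in \cite{LS} (a toric, or harmonic-map/Ernst-type, description with rod structure), that the metric differs from the model by terms of order $r^{-1}$. Each $b$-covariant derivative should gain one power of $r$, as with any mass-type expansion coming from a harmonic function of Green's type.

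\textbf{Step 2: a nonzero bounded Killing field.} The torus action provides two commuting Killing fields. In the $\mathrm{AF}_\beta$ end, one generator is asymptotic to the fiber field $T$, possibly after an $\R$-linear recombination depending on $\beta$. It therefore has asymptotically constant length. Since it is smooth on the compact region, it is bounded. The other generator rotates the $S^2$ directions and grows like $r$, so we do not use it. The point to check is that some linear combination of the two generators is bounded, and this follows from the asymptotic model: $T$ itself is an asymptotic Killing field with $\eta(T)=1$.

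\textbf{Step 3: $W^+\neq0$ and $W^-\neq0$.} This is the main obstacle. \cref{thm:intro-bounded} gives stability exactly when $W^+=0$ or $W^-=0$, so we must exclude both.

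\emph{Reduction to Hermitian geometry.} On a simply connected region, a Ricci-flat metric with $W^+\equiv0$ has flat $\Lambda^+$. It is therefore locally hyperkähler, hence Kähler and in particular locally Hermitian for that orientation; the same holds for $W^-$. Li-Sun show that the members with $b_2\geqslant3$ are not locally Hermitian in either orientation. So for those members, $W^+\not\equiv0$ and $W^-\not\equiv0$.

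\emph{Remaining members.} For the low-$b_2$ members, which coincide with Kerr/Chen-Teo/Taub-bolt type metrics, we cannot use the Li-Sun non-Hermitian result. Instead we show directly that they are not half-conformally flat.
\begin{itemize}
\item An ALF hyperkähler metric has cubic volume growth. By Minerbe's classification \cite{MinerbeMultiTN} and Chen-Chen \cite{ChenChenII}, it is then multi-Taub-NUT or dihedral.
\item Such a metric has a specific self-dual asymptotic structure and mass zero, and $T$ is triholomorphic.
\item A nonflat Li-Sun metric has positive mass, or an incompatible topology or rod structure.
\end{itemize}
So no member can be locally hyperkähler. Alternatively, we can invoke the unstability of Hermitian instantons from \cite{BO}, or simply note that these cases are known nonhyperkähler Ricci-flat metrics.

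With all hypotheses checked and both halves of the Weyl tensor nonzero, \eqref{eq:intro-dichotomy} shows that $g$ is not linearly stable.
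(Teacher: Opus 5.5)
Your proposal is correct and follows the paper's route. The corollary is \cref{thm:intro-bounded} once its hypotheses are checked, with $W^\pm\not\equiv0$ coming from the fact that the $b_2\geqslant3$ members are not locally Hermitian in either orientation. Your separate treatment of the low-$b_2$ members (Kerr and Chen--Teo; Taub--bolt is not AF and is not in the family) makes explicit a point the paper leaves implicit.

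Steps 1--2 rest on heuristics where the paper uses concrete inputs. The decay \eqref{eq:intro-end} with rate $1$ comes from the $T^2$-invariant chart of \cite[Proposition~A.1]{LS}; the general members are not given in closed form, so nothing can be ``read off''. The bounded field is $K=X_2-\beta X_1$, and \eqref{eq:LS-Gram} gives $|K|^2\to(\beta^2\rho^2+1)-2\beta^2\rho^2+\beta^2\rho^2=1$.
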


\vspace{0.3cm}

In higher dimensions, stability with an asymptotically parallel Killing vector field instead forces the splitting of the metric on its universal cover. We in particular recover the instability of \cite{DelaySchwarzschildTangherlini} and prove new ones. For the generalized AF ends considered here, we take
\[
 b=g_{\mathbb R^{n-1}}+d\theta^2,
 \qquad T=\partial_\theta.
\]
\begin{remark}
    Our applications more generally apply to asymptotically flat ALC ends corresponding to replacing $g_{\mathbb R^{n-1}}$ by any $(n-1)$-dimensional Ricci-flat cone, as long as the Fredholm analysis justifies specific integrations by parts.  
\end{remark}
We assume that our spaces have a \textit{generalized AF} end with an asymptotically constant Killing vector field $K$ satisfying:
\begin{equation}\label{eq:higher-ALF-decay}
 |\nabla_b^j(g-b)|_b+|\nabla_b^j(K-T)|_b
 =\mathcal O(r^{-\tau-j}),\qquad \text{ for }
 \tau>\frac{n-3}{2},\text{ and } j\geqslant0.
\end{equation}
This is the higher-dimensional analogue of an AF end in gravitational instanton terminology, its volume growth is $r^{n-1}$. 

\begin{remark}
For $n\geqslant5$, the metric decay in \eqref{eq:higher-ALF-decay} is the AF class of \cite[Definition~1.1]{KhuriWangMass}.  Mass questions for these AF and ALF ends are studied in \cite{MinerbeMass,KhuriWangMass}, and the relative mass with respect to a Ricci-flat reference introduced in \cite{KO} is the mass difference used in the toric comparison theorem of \cite{AlaeeKhuriKunduriMass}. 
\end{remark}

\begin{theorem}
\label{thm:higher-dimensional-alf}
Let $n\geqslant5$, and let $(M^n,g)$ be a complete Ricci-flat manifold
with a generalized AF end.  If a Killing field $K$ and the end satisfy
\eqref{eq:higher-ALF-decay}, then $g$ is linearly unstable unless $K$ is
parallel.
\end{theorem}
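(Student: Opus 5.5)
We will build an explicit TT destabilizing tensor out of the Killing field, modeled on the linearization of a Kaluza--Klein (Einstein--Maxwell) deformation. Write $\xi=K^\flat$. Since $g$ is Ricci-flat, $\xi$ is harmonic: $\Delta\xi=0$, $d^*\xi=0$, and $\nabla\xi=\tfrac12 d\xi$ is skew. The candidate is the traceless symmetric tensor
\[
h=\xi\otimes\xi-\tfrac1n|\xi|^2g ,
\]
possibly corrected by a multiple of $\mathcal L_Xg$ or a conformal term, chosen to restore the divergence-free condition. Along the end, \eqref{eq:higher-ALF-decay} gives $K\to T$ and $\nabla K=\mathcal O(r^{-\tau-1})$. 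Then $h$ tends to the constant tensor $d\theta^2-\tfrac1n b$, so $h$ itself is not in $L^2$. For this reason we work instead with $h_R=\chi_R h$ for cutoffs $\chi_R$, or equivalently with $\nabla h$, which decays like $r^{-\tau-1}$. The threshold $\tau>\frac{n-3}{2}$ is exactly what makes $|\nabla h|^2\sim r^{-2\tau-2}$ integrable against volume growth $r^{n-1}$. So the first step is to verify that $\cQ_g(\chi_R h)$ converges as $R\to\infty$. The cutoff error is $\int|\nabla\chi_R|^2|h|^2\sim R^{-2}R^{n-1}$, which does not vanish. I therefore expect to subtract the asymptotic constant part, using the model tensor $d\theta^2-\frac1n b$, which is parallel for $b$. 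Concretely, I would take $h=\xi\otimes\xi-|\xi|^2\,\cdot$ (a model-adapted trace correction), arranged so that $h\to0$ at infinity at rate $r^{-\tau}$. The precise choice is part of the computation.

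Second, the core identity. Using the Killing equation $\nabla^2\xi=-\Rm(\cdot,\cdot)\xi$ and $\Ric=0$, compute
\[
\cE_g(\xi\otimes\xi)=-2\nabla_e\xi\otimes\nabla_e\xi-2\mathring{\Rm}(\xi\otimes\xi)+(\text{terms from }\nabla^*\nabla\xi=0).
\]
Pairing with $\xi\otimes\xi$ and integrating should give an expression of the form
\[
\cQ_g(h)=-c\int_M|\xi|^2|\nabla\xi|^2\,dv_g-c'\int_M|\iota_K\nabla\xi|^2\,dv_g+(\text{boundary terms}),
\]
with $c,c'>0$. The curvature term $\langle\mathring{\Rm}(\xi\otimes\xi),\xi\otimes\xi\rangle=\Rm(\xi,\xi,\xi,\xi)=0$ vanishes by skew-symmetry, which is what makes the sign computable. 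The boundary terms are controlled by the decay of $\nabla\xi$ and vanish under $\tau>\frac{n-3}{2}$. Negativity then holds unless $\nabla\xi\equiv0$, i.e.\ unless $K$ is parallel.

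Third, the TT projection. The tensor $h$ has $\delta h=-\nabla_\xi\xi-(\delta\xi)\xi+\dots=-\tfrac12\iota_K d\xi+\tfrac1n d|\xi|^2$, which is generally nonzero. We decompose $h=h^{TT}+\mathcal L_Yg+fg$ by solving the vector Laplacian equation $\delta\mathcal L_Y g=\delta h$ in weighted spaces on the generalized AF end. Since $\cE_g$ annihilates Lie derivatives (and $\cQ_g$ is insensitive to them for Ricci-flat $g$), and trace parts contribute nonpositively to the relevant part of the form, we get $\cQ_g(h^{TT})\le\cQ_g(h)<0$. Alternatively, since $\cQ_g$ restricted to divergence-free tensors equals $\cQ_g$ of their TT part, it suffices to handle the Lie derivative part.

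This step, together with the choice of the decaying version of $h$, is the main obstacle. It requires the Fredholm theory of the vector Laplacian on generalized AF ends, with an isomorphism at the weight $-\tau-1$, and it requires checking that the integrations by parts for $Y$ with $\mathcal L_Yg$ decaying like $r^{-\tau}$ have vanishing boundary terms. Both should follow from $\tau>\frac{n-3}{2}$ and $n\ge5$, which places the weight in the non-exceptional range. If the projection proves delicate, I would fall back to testing $\cQ_g$ only on divergence-free tensors. I would also exploit that $\iota_K d\xi=-d|\xi|^2$ for a Killing field, so $\delta h$ is exact and $Y$ can be sought as a gradient $\nabla u$, reducing the problem to a scalar Poisson equation.
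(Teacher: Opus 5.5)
Your closing idea of seeking the correction as a gradient, $Y=\nabla u$, solving a scalar Poisson equation, is the one the paper uses. The two steps that carry the proof, however, are missing or incorrect.

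\textbf{You never produce a decaying test tensor, and your plan cannot produce one.} Since $\kappa\otimes\kappa\to d\theta^2$, which is not a multiple of $b$, no trace correction makes $\kappa\otimes\kappa+fg$ decay. Subtracting $\mathcal L_Yg$ with $Y$ in a \emph{decaying} weighted space (your isomorphism at weight $-\tau-1$) leaves this constant asymptotic part untouched.

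In the paper, the same Hessian that restores $\delta k=0$ also cancels the constant part. One takes $k=\nabla^2u-\kappa\otimes\kappa+(b_nV+\lambda_n/n)g$ with $\Delta u=a_nV+\lambda_n$, where $a_n=\frac{n+2}{2(n-1)}$ and $b_n=\frac{3}{2(n-1)}$. Here $u=-r^2/2+w$ \emph{grows quadratically}. The constant $\lambda_n$ is pinned by $a_n+\lambda_n=n-1$ and $b_n+\lambda_n/n=1$, so that $k\to -g_{\R^{n-1}}-d\theta^2+b=0$.

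Without such decay, the boundary terms do not vanish under $\tau>\frac{n-3}{2}$, contrary to your claim. For instance, integrating $\langle\cE_g(\kappa\otimes\kappa),\kappa\otimes\kappa\rangle$ by parts produces the flux $\int_{\partial M_R}V\partial_\nu V\,dA$. Because $\Delta V=-|F|^2$, this flux tends to $\int_M|F|^2\,dv_g\neq0$ for any nonparallel $K$.

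\textbf{Your sign argument in the third step rests on claims that are false for the Lichnerowicz form $\cQ_g$ on non-TT tensors.} First, $\cE_g$ does not annihilate Lie derivatives. On a Ricci-flat metric $\cE_g(\nabla^2u)=\nabla^2(\Delta u)$, which here equals $a_n\nabla^2V\neq0$. Second, $\cE_g(fg)=(\Delta f)g$, so $\cQ_g(fg)=n\int|df|^2\geqslant0$; trace parts are, if anything, positive. Hence $\cQ_g(h^{TT})\leqslant\cQ_g(h)$ has no justification.

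In the paper's computation, the Hessian and trace pieces contribute $-\frac12\int(a_nV+\lambda_n)|F|^2$ and $2\int(b_nV+\frac{\lambda_n}{n})|F|^2$. These are of the same order as the contribution $-\int|dV|^2$ of $\kappa\otimes\kappa$. Note also that pointwise $\langle\cE_g(\kappa\otimes\kappa),\kappa\otimes\kappa\rangle=-\frac12|dV|^2$ exactly, so your extra $-c\int|\xi|^2|\nabla\xi|^2$ term does not arise.

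The sign appears only after combining four facts:
\begin{itemize}
\item $\cE_gk=(F\circ F)_0-\frac{n-4}{2(n-1)}(\nabla^2V)_0$, whose Hessian part pairs to a vanishing boundary term against the TT tensor $k$;
\item $\delta(F\circ F)=-\frac12d|F|^2$;
\item $(F\circ F)(K,K)=|dV|^2$;
\item the flux identity $\int|F|^2=\int|dV|^2+\int V|F|^2$.
\end{itemize}
Together they give
\[
\cQ_g(k)=-\frac{3(n+2)}{4(n-1)}\int_M|dV|^2\,dv_g-\frac{n-5}{2}\int_M|F|^2\,dv_g .
\]
The hypothesis $n\geqslant5$ enters exactly through the coefficient $-\frac{n-5}{2}$. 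Your argument uses $n\geqslant5$ only for weight bookkeeping, so it would give a sign in dimension four as well, and that cannot be right. For $n=4$ the same computation yields $-\int|dV|^2+\frac12\int V|F|^2$, which has no sign without extra four-dimensional input. That input is the self-duality and Ernst-potential argument the paper needs in dimension four.
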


\begin{remark}
    The proof in particular yield explicit destabilizing tensors for the Riemannian Myers-Perry metrics of \eqref{eq:euclidean-myers-perry} and all of the Riemannian Schwarzschild-Tangherlini metrics of \eqref{eq: STeuc} in arbitrary dimensions, see \cref{sec:myers-perry}. In that case our result recovers, in every dimension, the instability proved by a different construction for $4\leqslant n\leqslant9$ in \cite{DelaySchwarzschildTangherlini}. 
\end{remark}

Combining \cref{thm:higher-dimensional-alf} with parallel-spinor
semistability gives a particularly simple rigidity statement: every such
special-holonomy metric is flat. 
\begin{corollary}
\label{cor:higher-special-holonomy}
Under the hypotheses of \cref{thm:higher-dimensional-alf-bis}, if $g$, or a finite cover of $g$, carries a nonzero parallel spinor, then $g$ is flat.
\end{corollary}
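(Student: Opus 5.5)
Under the hypotheses of \cref{thm:higher-dimensional-alf-bis} (which I read as the version of \cref{thm:higher-dimensional-alf} with a generalized AF end and a Killing field $K$ satisfying \eqref{eq:higher-ALF-decay}), the plan is to show that a parallel spinor forces $K$ to be parallel and then to exploit the resulting splitting. The semistability input is the theorem of Dai--Wang--Wei \cite{DaiWangWei}. On a Ricci-flat manifold with a nonzero parallel spinor $\sigma$, the map $h\mapsto h\cdot\sigma$ identifies $\cE_g$ on TT tensors with the square of a twisted Dirac operator, namely $\langle \cE_g h,h\rangle$ integrates to $\|D(h\cdot\sigma)\|^2\geqslant0$. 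This gives $\cQ_g\geqslant0$.

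\textbf{Step 1: semistability on $M$.} In the noncompact setting I would justify the integration by parts for TT tensors with $\cQ_g(h)$ finite, using the decay and the same Fredholm/cutoff analysis that \cref{thm:higher-dimensional-alf} already relies on. If the spinor lives only on a finite cover $\tilde M\to M$, I pull back a TT tensor $h$ on $M$ to $\tilde h$ on $\tilde M$. The pullback $\tilde h$ is still TT, and $\cQ_{\tilde g}(\tilde h)=d\,\cQ_g(h)$, where $d$ is the degree of the cover. The lifted end is again generalized AF, possibly with a longer circle or several ends, and the integrations by parts still apply there. Hence $\cQ_g(h)\geqslant0$, so $g$ is linearly stable.

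\textbf{Step 2: $K$ is parallel.} By \cref{thm:higher-dimensional-alf}, a stable $g$ must have $K$ parallel. So $\nabla K=0$, and since $K\to T$ at infinity we get $|K|\equiv1$.

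\textbf{Step 3: flatness.} A parallel unit field splits the universal cover isometrically as $\tilde M=N^{n-1}\times\R$, where $N$ is complete and Ricci-flat. I then compare volume growth. The end of $M$ has volume growth $r^{n-1}$ with a circle fiber of asymptotically constant length, and the flow of $K$ is asymptotically rotation in $\theta$. So $M$ is, at least near infinity, a quotient of $N\times\R$ by a translation-type action, and the leaves of $K^\perp$ model $N$ as asymptotically Euclidean with maximal volume growth $r^{n-1}$. More precisely, $g-b=\mathcal O(r^{-\tau})$ and $K^\perp$ approaches $T^\perp$, so $N$ is a complete Ricci-flat manifold whose end is asymptotic to $\R^{n-1}$ with asymptotic volume ratio $1$. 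Bishop--Gromov then forces $N$ to be isometric to $\R^{n-1}$. It follows that $\tilde M$ is flat, and therefore $g$ is flat.

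\textbf{Main obstacle.} I expect the hardest step to be the last one: making rigorous that $N$ has Euclidean volume ratio. This requires controlling how the deck group acts, so that the $K$-orbits close up with period tending to the length of the circle at infinity. It also requires ruling out $N$ having several ends or a cone of smaller angle. I would handle this by working directly on the end, where the quotient of a slice transverse to $K$ is asymptotically a Euclidean exterior region. Bishop--Gromov on $N$ then gives equality with the Euclidean volume, hence rigidity.
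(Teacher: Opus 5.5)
Your proposal is correct and follows essentially the paper's route. Dai--Wang--Wei semistability (pulled back from the finite cover when needed) gives $\cQ_g(k)\geqslant0$ for the explicit tensor $k$, the signed identity \eqref{eq:higher-energy-signed} then forces $dV=0$ and $F=0$, and the splitting $\tilde M=\R\times N$ with $N$ Ricci-flat and asymptotically Euclidean gives flatness; the paper states this last step in one line, and you flesh it out via Bishop--Gromov.

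The one difference is in Step~1: the paper applies the Dai--Wang--Wei factorization to the compactly supported (non-TT) cutoff $\chi_Rk$ and passes to the limit via \eqref{eq:cutoff-spinor}, so it only needs nonnegativity on this single tensor. Your claim of full stability for all finite-energy TT tensors is not needed, and finiteness of $\cQ_g(h)$ alone does not obviously justify it.
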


\subsection{Idea of proof for the $4$-dimensional result }
We fix notation once for the entire paper.  For a Killing field $K$ we write
\begin{equation}\label{eq:intro-data}
 \kappa=K^\flat,\qquad F=d\kappa,\quad  \text{ and }\quad
 F^\pm=\tfrac12(F\pm*F).
\end{equation}
Two-forms are normalized by $|\alpha|^2=\frac12\alpha_{ij}\alpha^{ij}$, and the composition of two-tensors is $(\beta\circ\alpha)_{ij}=\beta_i{}^p\alpha_{jp}$ to match Maxwell theory conventions.  The scalar Laplacian $\Delta=-\tr\nabla^2$ is the positive one.  With these choices, LeBrun noted in \cite{LeBrunEM}, that the standard Maxwell stress reads
\begin{equation}\label{eq:intro-Maxwell-algebra}
 F_{ip}F_j{}^p-\tfrac14F_{pq}F^{pq}g_{ij}=2(F^+\circ F^-)_{ij}.
\end{equation}
We also use the following Riemannian curvature along the article:
\[
 R(X,Y)Z=\nabla_X\nabla_YZ-\nabla_Y\nabla_XZ-\nabla_{[X,Y]}Z,
 \qquad
 \Rm(X,Y,Z,W)=\langle R(X,Y)W,Z\rangle .
\]
Thus the sectional curvatures are $\Rm(X,Y,X,Y)$, and
$R_{ijkl}=\Rm(e_i,e_j,e_k,e_l)$.

Consider a Ricci-flat
four-manifold $(M^4,g)$, a Killing field $K$ and assume, up to rescaling, that its asymptotic length is $1$. We \emph{complete} the main geometric perturbation $-\kappa\otimes\kappa$ thanks to a Hessian and a pure trace term to construct a TT tensor, named $k_K$ solving the equation
\begin{equation}\label{eq:intro-inverse}
 \cE_gk_K=2F^+\circ F^-.
\end{equation}
The intuition that lead us to consider this deformation is that it is an \textit{infinitesimal Einstein-Maxwell deformation} of $g$: by LeBrun \cite{LeBrunEM}, the Riemannian Einstein-Maxwell equation in
dimension four reads $\Ric=2F^+\circ F^-$ for a harmonic two-form $F=F^++F^-$. A maximum principle argument , computations and integrations by parts lead to the sign for the stability quadratic form:
\begin{align}
 \cQ_g(k_K)  \leqslant 0,
 \label{eq:intro-sign-plus}
\end{align} with equality forcing $F$ to be either self-dual or anti-self-dual, which in turn forces the metric to satisfy $W^+=0$ or $W^-=0$ by Kostant's formula.

\begin{remark}
    The hypothesis that the Killing vector field is bounded is crucial and essentially forces the metric to be ALF:
    \begin{itemize}
        \item on an ALE end a Killing field grows linearly and while the formulae still hold, the resulting tensor $k_K$ is not admissible: $u$ grows quartically and $k_K$ quadratically. Additionally, the equation is typically not solvable since $2F^+\circ F^-$ is not orthogonal to the $L^2$-kernel of Ricci-flat ALE metrics, see  \cref{sec:ALE-remark}.
        \item bounded Killing vector fields cannot exist on the typical other typical ends of Ricci-flat metrics, namely ALG, ALG$^*$, Kasner, ALH or ALH$^*$, see \cref{sec:other-ends}.
    \end{itemize}
\end{remark} 

\subsection{Linear instability of nonflat static Ricci-flat metrics}

For static Ricci-flat metrics, the destabilizing tensor is even simpler. 

\begin{theorem}\label{thm:intro-static}
Let $(M^4,g)$ be a nonflat, connected Ricci-flat four-manifold carrying a nonzero Killing field $K$ with $\kappa\wedge d\kappa=0$. Then
\begin{equation}\label{eq:intro-static-formula}
 h=F^+\circ F^-\text{ is TT},\qquad \text{ and } \qquad
 \langle\cE_gh,h\rangle=-\tfrac12\left|d\Big(\tfrac{|F|^2}{2}\Big)\right|^2\leqslant 0,
\end{equation}
with equality identically only if $g$ is flat.
This implies the instability of every smooth Riemannian Myers-Korotkin-Nicolai metric \cite{Myers,KorotkinNicolai,ReirisPeraza}.
\end{theorem}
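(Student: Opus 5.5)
The plan is to work pointwise with the Maxwell field of the Killing field, $F=d\kappa$, which is closed and coclosed. Indeed, $d^*d\kappa=2\Ric(K)^\flat=0$ on a Ricci-flat manifold, so $F$ is harmonic and $F^\pm$ are separately harmonic. The tensor $h=F^+\circ F^-$ is symmetric and traceless by the algebra of $\Lambda^+\otimes\Lambda^-\cong S^2_0$; this is the identity \eqref{eq:intro-Maxwell-algebra}. The divergence-free property is the Maxwell stress computation: for a closed and coclosed $F$, the stress tensor $F_{ip}F_j{}^p-\frac14|F|^2_{\text{full}}g_{ij}$ is divergence free. So $\delta h=0$ holds without any hypothesis on $\kappa\wedge d\kappa$, and $h$ is TT.

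For the Lichnerowicz identity, I will use the Killing equation $\nabla F=-2\,\Rm(K,\cdot)$, more precisely $\nabla_X F=2R(X,K)$ up to normalization. Projected to $\Lambda^\pm$, this gives $\nabla F^\pm=2\,W^\pm(K\wedge\cdot)^\pm$, since $\Ric=0$. I then compute $\nabla^*\nabla h$ by Leibniz:
\[
\nabla^*\nabla h=(\nabla^*\nabla F^+)\circ F^-+F^+\circ\nabla^*\nabla F^- -2\sum_i\nabla_iF^+\circ\nabla_iF^-.
\]
The Weitzenböck formula for harmonic self-dual forms on a Ricci-flat 4-manifold gives $\nabla^*\nabla F^\pm=2W^\pm(F^\pm)$, with the paper's normalization to be checked. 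The term $\mathring{\Rm}h$ on $\Lambda^+\otimes\Lambda^-$ acts through $W^+\otimes 1+1\otimes W^-$ with a constant factor. These two contributions should cancel, leaving $\cE_gh=-2\sum_i\nabla_iF^+\circ\nabla_iF^-$, possibly with a constant. This is essentially the infinitesimal Einstein--Maxwell computation.

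The static hypothesis now enters. The condition $\kappa\wedge d\kappa=0$ means that $K^\perp$ is integrable, and hence that $F=\kappa\wedge\frac{d|K|^2}{|K|^2}$ up to a factor: $F=-d\log V\wedge\kappa$ with $V=|K|$. Then $F^+$ and $F^-$ are the self-dual and anti-self-dual parts of a simple form $\alpha\wedge\kappa$. Consequently $|F^+|=|F^-|$, and $h$ is a rank-two-type trace-free tensor built from the planes $\mathrm{span}(\alpha,\kappa)$ and its orthogonal complement. I will also express $\nabla F^\pm$ through the derivatives of $\alpha$ and $\kappa$, using that $V$ is harmonic up to the weight; in fact $\Delta V=0$ after the standard static reduction, since $g=V^2dt^2+\bar g$ locally.

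The main obstacle is the final algebraic step: evaluating $\langle\sum_i\nabla_iF^+\circ\nabla_iF^-,\,F^+\circ F^-\rangle$ and showing that it equals $\frac14|d(|F|^2/2)|^2$. The route I would try first is to work in the local warped product $g=V^2dt^2+\bar g$ with $\bar g$ a Ricci-flat-related 3-metric ($\bar\Ric=V^{-1}\bar\nabla^2V$, $\Delta_{\bar g}V=0$). There $F=2V\,dV\wedge dt$ and $|F|^2=4|dV|^2_{\bar g}$, and I would compute everything in an orthonormal frame $e_0=V^{-1}\partial_t$, $e_1=\nabla V/|\nabla V|$, $e_2,e_3$. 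The pairing then reduces to an expression in $\bar\nabla^2V$, which should collapse to $|\bar\nabla|\nabla V|^2|^2$ using $\mathrm{tr}\,\bar\nabla^2V=0$. The sign becomes manifest at that point. The regular set where $dV\neq0$ is dense by analyticity, and the formula extends by continuity.

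For equality, vanishing of the expression forces $|F|^2$ to be constant. Since $F$ is harmonic and $\Delta\frac{|F|^2}{2}=-|\nabla F|^2+\langle\nabla^*\nabla F,F\rangle$, a constant $|F|^2$ together with the static structure forces $\nabla F=0$. Therefore $\Rm(K,\cdot)=0$, so $\bar\nabla^2V=0$, so $\bar g$ is flat and $V$ is affine. Smoothness (or the nonflat assumption) then yields a contradiction unless $g$ is flat, using the unique continuation of Ricci-flat metrics.

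Application: for Myers--Korotkin--Nicolai, $h$ decays fast enough that $\cQ_g(h)$ is finite and negative. This requires checking $|F|\sim r^{-2}$ on their ends.
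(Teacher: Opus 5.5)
Your overall architecture matches the paper: TT from the Maxwell stress, the Weitzenb\"ock cancellation giving $\cE_gh=-2\sum_a(\nabla_aF^+)\circ(\nabla_aF^-)$ (this is \cref{prop:harmonic-product} with $s=0$), and $|F^+|=|F^-|$ from the static condition (the paper writes $F\wedge F=d(\kappa\wedge d\kappa)=0$). The gap is the step you flag as the main obstacle. You leave the pairing to a static-frame computation that ``should collapse'', so the central identity is not actually proved.

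The missing idea is one you already used for tracelessness. The map $\alpha^+\otimes\beta^-\mapsto\alpha^+\circ\beta^-$ is an isometry $\Lambda^+\otimes\Lambda^-\to S^2_0$, so $\langle\alpha\circ\beta,\alpha'\circ\beta'\rangle=\langle\alpha,\alpha'\rangle\langle\beta,\beta'\rangle$. Hence
\[
 \sum_a\big\langle(\nabla_aF^+)\circ(\nabla_aF^-),F^+\circ F^-\big\rangle
 =\sum_a\langle\nabla_aF^+,F^+\rangle\langle\nabla_aF^-,F^-\rangle
 =\tfrac14\big\langle d|F^+|^2,d|F^-|^2\big\rangle=\tfrac14|dq|^2,
\]
with $q=|F|^2/2=|F^\pm|^2$. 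This holds pointwise everywhere. It needs no static coordinates, no $\bar\Delta N=0$, and no density or continuity argument. The static hypothesis enters only through $|F^+|=|F^-|$.

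Two further points need fixing.

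\emph{Equality case.} Constant $|F|^2$ together with $\Delta\frac{|F|^2}{2}=\langle\nabla^*\nabla F,F\rangle-|\nabla F|^2$ does not by itself give $\nabla F=0$; you must show $\langle\nabla^*\nabla F,F\rangle=0$. This is true. On a Ricci-flat manifold the Weitzenb\"ock formula makes this term a constant multiple of $|F|^2$ times the curvature of the plane spanned by $\bar\nabla N$ and $K$. That is a multiple of $N^{-1}\bar\nabla^2N(\bar\nabla N,\bar\nabla N)=\frac{1}{2N}\langle d|\bar\nabla N|^2,\bar\nabla N\rangle$, which vanishes because $q=2|\bar\nabla N|^2$ is constant. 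The paper instead runs Bochner for $N$ on $(\bar M,\bar g)$. There, $\bar\nabla^2N(\bar\nabla N,\cdot)=0$, $\bar\Delta N=0$ and $\Ric_{\bar g}=N^{-1}\bar\nabla^2N$ give $|\bar\nabla^2N|^2=0$. So $\bar g$ is flat, and $g$ is flat on the dense set $\{K\neq0\}$.

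\emph{Myers--Korotkin--Nicolai application.} The check you propose would fail. These ends are Kasner-type, not asymptotically flat. With $R=\rho^\gamma$ the radial distance, $|F|^2=2|h|$ decays only like $R^{-\delta}$, where $\delta=\frac{2(2-a)^2}{a^2-2a+4}\in(0,2)$; so $|F|$ decays more slowly than $R^{-1}$. What makes $\cQ_g(h)$ finite and the cutoff and boundary errors vanish is the quadratic volume growth $dv_g\approx cR\,dR\,d\tau\,dz\,d\phi$, under which any $\delta>0$ suffices.
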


\section*{Acknowledgements}
The author thanks Olivier Biquard for suggesting to examine the construction in higher dimensions.  This work is supported by the National Science Foundation under Grant No.~DMS-2405328.

\section{Destabilization of static Ricci-flat manifolds}
\label{sec:harmonic-static}

\subsection{A natural family of test tensors on Einstein $4$-manifolds}
The convention $(\beta\circ\alpha)_{ij}=\beta_i{}^p\alpha_{jp}$
gives an isometry
\begin{align*}
    \Lambda^+\otimes\Lambda^-&\longrightarrow S^2_0T^*M,\\
  \alpha^+\otimes\beta^-&\longmapsto\alpha^+\circ\beta^- ,
\end{align*}
whose image is the space of trace-free symmetric $2$-tensors.

\begin{proposition}\label{prop:harmonic-product}
Let $(M^4,g)$ be oriented Einstein with scalar curvature $s$.  If
$\alpha^+\in\Omega^2_+(M)$ and $\beta^-\in\Omega^2_-(M)$ are harmonic,
then $h=\alpha^+\circ\beta^-$ is TT, and
\begin{equation}
    \cE_g h=-\frac{s}{2}h-2\sum_a(\nabla_a\alpha^+)\circ(\nabla_a\beta^-),
 \label{eq:EH-harmonic}
\end{equation}
which implies from $\big\langle(\nabla_a\alpha^+)\circ(\nabla_a\beta^-),
 \alpha^+\circ\beta^-\big\rangle
 =\langle\nabla_a\alpha^+,\alpha^+\rangle
  \langle\nabla_a\beta^-,\beta^-\rangle,$ that
\begin{equation}
    \langle\cE_g h,h\rangle=-\frac{s}{2}|h|^2-\frac12\langle d|\alpha^+|^2,d|\beta^-|^2\rangle.\label{eq:energy-harmonic}
\end{equation}
\end{proposition}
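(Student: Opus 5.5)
Work in a local orthonormal frame and use the isometry $\Lambda^+\otimes\Lambda^-\to S^2_0T^*M$. The argument has four steps: show $h$ is TT, compute the rough Laplacian by Leibniz, handle the curvature term via the block decomposition of $\Rm$, and derive the pointwise energy identity.

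\textbf{Step 1: $h$ is TT.} Since $\alpha^+\circ\beta^-$ lies in the image of the isometry, it is symmetric and trace-free; these are pointwise algebraic facts about commuting actions of $\mathfrak{so}(3)_+$ and $\mathfrak{so}(3)_-$. For the divergence, write
\[
(\delta h)_j=-\nabla^i(\alpha_i{}^p\beta_{jp})=-(\nabla^i\alpha_i{}^p)\beta_{jp}-\alpha_i{}^p\nabla^i\beta_{jp}.
\]
The first term vanishes because $\delta\alpha^+=0$. For the second, harmonicity together with $\alpha^+\in\Omega^2_+$ and $\beta^-\in\Omega^2_-$ gives both $d\beta^-=0$ and $\delta\beta^-=0$, i.e. $\nabla\beta^-$ lies in the kernel of both projections. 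In dimension four, $\Lambda^1\otimes\Lambda^-\cong\Lambda^3\oplus\Lambda^1\oplus(\text{a twistor-type piece})$. For anti-self-dual forms the $\Lambda^3$ and $\Lambda^1$ parts both correspond to $\delta\beta$, up to Hodge star. So $\nabla\beta^-$ lies in the irreducible piece $V_{(2,1)}$ of type $S^3_-\otimes S^1_+$ in spinor terms.

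The contraction $\alpha_i{}^p\nabla^i\beta_{jp}$ is a bilinear $\Lambda^+\otimes(\Lambda^1\otimes\Lambda^-)\to\Lambda^1$. Restricted to that twistor piece, $S^2_+\otimes(S^1_+\otimes S^3_-)\to S^1_+\otimes S^1_-$ needs an $S^1_-$ inside $S^3_-$, which does not exist. Hence it vanishes. I would confirm this by a direct symmetric-contraction argument: for ASD $\beta$, the relation $d\beta=0$ lets one rewrite $\alpha^{ip}\nabla_i\beta_{jp}$ using the commutation of $\Lambda^+$ with $\Lambda^-$, yielding its own negative.

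\textbf{Step 2: rough Laplacian.} By Leibniz,
\[
\nabla^*\nabla h=(\nabla^*\nabla\alpha)\circ\beta+\alpha\circ(\nabla^*\nabla\beta)-2\sum_a\nabla_a\alpha\circ\nabla_a\beta.
\]
The Weitzenb\"ock formula on $\Lambda^\pm$ for an Einstein metric gives $\Delta=\nabla^*\nabla+\frac s3-2W^\pm$. So for harmonic forms, $\nabla^*\nabla\alpha=2W^+\alpha-\frac s3\alpha$, and similarly for $\beta$.

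\textbf{Step 3: curvature term.} It remains to compute $\mathring{\Rm}(\alpha\circ\beta)$. In the block form $\Rm=\begin{pmatrix}W^++\frac s{12}&0\\0&W^-+\frac s{12}\end{pmatrix}$ (Einstein, so no off-diagonal term), the classical formula on $S^2_0\cong\Lambda^+\otimes\Lambda^-$ is
\[
\mathring{\Rm}(\alpha\circ\beta)=(W^+\alpha)\circ\beta+\alpha\circ(W^-\beta)-\frac{s}{12}\alpha\circ\beta,
\]
up to normalization conventions that I would fix by testing on the round $S^4$ and on $S^2\times S^2$. Substituting, the $W^\pm$ terms cancel against $-2\mathring{\Rm}$, and the scalar terms give $-\frac{2s}3+\frac{s}6=-\frac s2$. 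This yields \eqref{eq:EH-harmonic}. Pinning down exactly these constants and signs in the paper's conventions is the main obstacle; I would verify them on an explicit frame basis of $\Lambda^\pm$ built from quaternionic matrices.

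\textbf{Step 4: energy identity.} The inner product identity holds pointwise because the map is an isometry of tensor products:
\[
\langle a\circ b,c\circ d\rangle=\langle a,c\rangle\langle b,d\rangle\quad\text{for } a,c\in\Lambda^+,\ b,d\in\Lambda^-,
\]
and $\nabla_a\alpha^+\in\Lambda^+$, $\nabla_a\beta^-\in\Lambda^-$. Then
\[
\sum_a\langle\nabla_a\alpha,\alpha\rangle\langle\nabla_a\beta,\beta\rangle=\tfrac14\langle d|\alpha|^2,d|\beta|^2\rangle,
\]
and the factor $-2$ gives \eqref{eq:energy-harmonic}.
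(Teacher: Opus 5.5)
Your proposal is correct and follows the paper's proof. It gets trace-freeness from the isometry $\Lambda^+\otimes\Lambda^-\to S^2_0T^*M$, combines the Leibniz rule with the Einstein Weitzenb\"ock formulas, uses the same formula for $\mathring{\Rm}(\alpha^+\circ\beta^-)$, and uses the isometry again for the energy identity.

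The only difference is that you prove $\delta h=0$ by a Schur-lemma argument, where the paper cites LeBrun for the divergence-freeness of the Maxwell stress tensor. Your argument is valid, with one small correction: $\Lambda^1\otimes\Lambda^-$ is $12$-dimensional and splits as $\Lambda^1\oplus(S_+\otimes S^3_-)$, with a single $\Lambda^1$ summand and no separate $\Lambda^3$ piece. That is exactly why $\delta\beta^-=0$ alone already puts $\nabla\beta^-$ in the twistor piece.
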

\begin{remark}
    Consider an Einstein $4$-manifold with positive scalar curvature, the above lemma leads to a large class of test TT symmetric $2$-tensors. The stability of the metric would imply an ``anticorrelation'' of the norms of selfdual and anti-selfdual $2$-forms. 
\end{remark}
\begin{proof}
Consider $\alpha^+\in\Omega^2_+(M)$ and $\beta^-\in\Omega^2_-(M)$ closed and
coclosed $\pm$-selfdual $2$-forms on $M$.  The tensor $h:=\beta^-\circ\alpha^+$ is trace-free algebraically and it is divergence-free by \cite[Eq.~(6)]{LeBrunEM}. 

For an Einstein four-metric with scalar curvature $s$, the Hodge Weitzenb\"ock formulas are
\[
 \nabla^*\nabla\alpha^+=2W^+\alpha^+-\frac{s}{3}\alpha^+,
 \qquad \text{ and }\qquad
 \nabla^*\nabla\beta^-=2W^-\beta^- -\frac{s}{3}\beta^-.
\]
The product rule for $\nabla^*\nabla(\alpha^+\circ\beta^-)$ and
\[
 \mathring{\Rm}(\beta^-\circ\alpha^+)
 =(W^-\beta^-)\circ\alpha^++\beta^-\circ(W^+\alpha^+)-\frac{s}{12}h.
\]
gives \eqref{eq:EH-harmonic} on an Einstein metric, and \eqref{eq:energy-harmonic} follows directly. See \cite{NO1,NO2} for similar computations that additionally apply to Ricci solitons. 
\end{proof}

\subsection{Application to static Ricci-flat manifolds}

For a Killing field $K$, denote $\kappa:=K^\flat$, then $\delta \kappa=0$ and the Killing identity gives
\begin{equation}\label{eq:killing}
 \delta d \kappa=2\Ric(K,\cdot).
\end{equation}
Thus, the $2$-form
\begin{equation}\label{eq:Pap-static}
 F=d\kappa
\end{equation}
is closed and coclosed when $g$ is Ricci-flat.

We are now ready to prove \cref{thm:intro-static} in the following refined form. 
\begin{theorem}[Refined \cref{thm:intro-static}]
Let $(M^4,g)$ be connected, oriented and Ricci-flat, and let $K\not\equiv0$ be a
Killing field satisfying
\begin{equation}\label{eq:intro-static}
 \kappa\wedge d\kappa=0.
\end{equation}
Put $F=d\kappa$, $F^\pm$ its $\pm$-selfdual parts, and $h=F^-\circ F^+$.  Then $h$ is TT and
\begin{equation}\label{eq:intro-static-energy}
 \langle\cE_gh,h\rangle
 =-\frac12\Big|d\Big(\tfrac{|F|^2}{2}\Big)\Big|^2.
\end{equation}
If the right-hand side vanishes identically, then $g$ is flat. 
\end{theorem}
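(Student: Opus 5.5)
We apply \cref{prop:harmonic-product} with $\alpha^+=F^+$, $\beta^-=F^-$ and $s=0$, and then use the static condition \eqref{eq:intro-static} to identify the two norms $|F^+|$ and $|F^-|$. By \eqref{eq:killing}, $F=d\kappa$ is closed and coclosed on the Ricci-flat manifold. Since $*$ preserves harmonicity, $F^\pm$ are harmonic self-dual and anti-self-dual forms. \cref{prop:harmonic-product} then gives directly that $h=F^-\circ F^+$ is TT (the order in the composition only transposes $h$, which is symmetric). It also gives
\[
\langle\cE_gh,h\rangle=-\tfrac12\langle d|F^+|^2,d|F^-|^2\rangle .
\]

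\textbf{Step 1: $|F^+|=|F^-|$.} Next we show $F\wedge F=0$, which gives $|F^+|^2=|F^-|^2=\tfrac12|F|^2$ pointwise, since $F\wedge F=(|F^+|^2-|F^-|^2)\,dv_g$ up to a normalization constant.
\begin{itemize}
\item The zero set of a nonzero Killing field on a connected manifold is nowhere dense: a Killing field is determined by its $1$-jet at a point, and its zeros form totally geodesic submanifolds of codimension at least $2$.
\item At a point where $\kappa\neq0$, the condition $\kappa\wedge F=0$ forces $F=\kappa\wedge\theta$ for some covector $\theta$. Hence $F\wedge F=0$ there.
\item By continuity, $F\wedge F=0$ everywhere.
\end{itemize}
Then $d|F^\pm|^2=d(|F|^2/2)$, and the formula above becomes exactly \eqref{eq:intro-static-energy}.

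\textbf{Step 2: rigidity.} This is the main step. Suppose $|F|^2\equiv c$ is constant. It suffices to prove flatness on the open dense set where $K\neq0$, since $\Rm$ is continuous.

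\emph{Case $c=0$.} Then $F=0$, so $\nabla\kappa=\tfrac12 F=0$ and $K$ is parallel. The metric locally splits as $\R\times N^3$ with $N$ Ricci-flat. In dimension three, Ricci-flat means flat, so $g$ is flat.

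\emph{Case $c>0$.} Here we use the local static structure. Since $\kappa\wedge d\kappa=0$, the distribution $K^\perp$ is integrable (Frobenius). Locally, where $K\neq0$, we can write
\[
g=V^2dt^2+g_N,\qquad K=\partial_t,\quad V=|K|,
\]
with $g_N$ a metric on a $3$-dimensional slice $N$. The vacuum static equations then read
\[
\Ric_{g_N}=V^{-1}\nabla^2V,\qquad \Delta_{g_N}V=0 .
\]
In these coordinates $\kappa=V^2dt$, so $F=2V\,dV\wedge dt$, and $|F|^2$ is a fixed constant multiple of $|dV|^2_{g_N}$. Hence $|dV|^2$ is constant on $N$.

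We now apply the Bochner formula on $N$:
\[
\tfrac12\Delta|dV|^2=-|\nabla^2V|^2-\Ric_{g_N}(\nabla V,\nabla V)+\langle dV,d\Delta V\rangle .
\]
\begin{itemize}
\item The left-hand side vanishes because $|dV|^2$ is constant.
\item The term $\langle dV,d\Delta V\rangle$ vanishes because $V$ is harmonic.
\item For the Ricci term, $\Ric_{g_N}(\nabla V,\nabla V)=V^{-1}\nabla^2V(\nabla V,\nabla V)=\tfrac{1}{2V}\langle\nabla V,\nabla|\nabla V|^2\rangle=0$.
\end{itemize}
Therefore $\nabla^2V=0$. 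This gives $\Ric_{g_N}=0$, so $g_N$ is flat (again because $N$ is three-dimensional), and $V$ is an affine function on $N$.

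Finally, the warped product $V^2dt^2+g_N$, with $g_N$ flat and $V$ affine, is locally flat. If $V$ is constant it is a product; otherwise it is polar coordinates on $\R^2$ times $\R^2$. So $g$ is flat on a dense open set, hence everywhere.

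The main obstacle is this rigidity step. The delicate points are that the Bochner term involving $\Ric_{g_N}$ must vanish exactly, which it does because $|\nabla V|$ is constant, and that the argument must be carried out only on the dense set where $K\neq0$ and then extended by continuity.
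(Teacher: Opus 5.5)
Your proof is correct and follows the paper's argument. You apply \cref{prop:harmonic-product} with $s=0$, deduce $|F^+|^2=|F^-|^2=|F|^2/2$ from $F\wedge F=0$, and in the rigidity case pass to static coordinates on $\{K\neq0\}$, where the Bochner identity with $\Ric_{g_N}=V^{-1}\nabla^2V$ and constant $|dV|$ forces $\nabla^2V=0$ and flatness, then extend by density. The only cosmetic differences are that the paper gets $F\wedge F=0$ globally in one line via $F\wedge F=d(\kappa\wedge d\kappa)=0$ rather than by your pointwise factorization $F=\kappa\wedge\theta$ plus continuity, and that it neither treats $|F|^2\equiv0$ separately nor spells out why $V^2dt^2+g_N$ is flat.
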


\begin{proof}
Equation \eqref{eq:intro-static} gives
\[
 F\wedge F=d(\kappa\wedge F)=d(\kappa\wedge d \kappa)=0,
\]
Thus $|F^+|^2=|F^-|^2=q$, where $q=|F|^2/2$.
Applying \cref{prop:harmonic-product} to $F^-\circ F^+$ proves \eqref{eq:intro-static-energy}.

For the equality case, recall that on any connected open set
where $K\neq0$ the condition $\kappa\wedge d\kappa=0$ means that
$K^\perp$ is an integrable distribution and produces coordinates in which
\begin{equation}\label{eq:static-splitting}
 g=N^2d\tau^2+\bar g,\qquad K=\partial_\tau,\qquad N=|K|,\qquad
 \partial_\tau N=0,\quad\mathcal L_K\bar g=0,
\end{equation}
where $\bar g$ is a Riemannian metric on the orthogonal $3$-manifold, and $F=2\,dN\wedge(N\,d\tau)$. 

Ricci-flatness becomes equivalent to the static Einstein equations
\begin{equation}\label{eq:static-vacuum}
 \bar\Delta N=0,\qquad
 \Ric_{\bar g}=N^{-1}\bar\nabla^2N,\qquad
 q=2|\bar\nabla N|^2.
\end{equation}
If $dq=0$, then
$\bar\nabla^2N(\bar\nabla N,\cdot)=0$.  The Bochner identity gives
\[
 0=-\frac12\bar\Delta|\bar\nabla N|^2
  =|\bar\nabla^2N|^2
   +N^{-1}\bar\nabla^2N(\bar\nabla N,\bar\nabla N)
  =|\bar\nabla^2N|^2.
\]
Hence $\bar g$ is Ricci-flat and therefore flat in dimension three, and $g$ is flat on $\{K\neq0\}$. Since a nonzero Killing field cannot vanish on an open set, $g$ is flat everywhere.
\end{proof}

\subsection{The periodic Schwarzschild family}

The smooth Riemannian versions of the Myers/Korotkin-Nicolai metrics
\cite{Myers,KorotkinNicolai,ReirisPeraza} depend, up to
homothety, on a single parameter $a\in(0,2)$, and has Weyl form
\begin{equation}\label{eq:MKN-metric}
 g=e^{\mathcal U}d\tau^2+e^{-\mathcal U}
  \bigl[e^{2\mathcal K}(d\rho^2+dz^2)+\rho^2d\phi^2\bigr],
 \qquad z\in\R/\ell\Z, \quad\ell>0
\end{equation}
and is static with $K=\partial_\tau$, and the period of $K$ is chosen to make the metric smooth and that of $z$ determines the topology (it is infinite, periodic if $z\in\mathbb R$). For these asymptotics, see \cite[Section 1]{ReirisPeraza} and for the original Weyl reduction, see
\cite{Weyl}. At infinity, the asymptotics are
\begin{equation}\label{eq:MKN-asymptotics}
 \mathcal U=a\log\rho+c_0+\mathcal{O}(e^{-c\rho}),\qquad \text{ and }\qquad
 \mathcal K=\frac{a^2}{4}\log\rho+c_1+\mathcal{O}(e^{-c\rho}).
\end{equation}
It is convenient to introduce a radial distance parameter
\begin{equation}\label{eq:MKN-radius}
 R =\rho^\gamma,\qquad
 \gamma=\frac{a^2-2a+4}{4},\qquad \text{ and } \qquad
 dv_g\approx c R\,dR\,d\tau\,dz\,d\phi.
\end{equation}
For the tensor in \cref{thm:intro-static}, we explicitly find for $\delta=\frac{2(2-a)^2}{a^2-2a+4}>0$,
\begin{equation}\label{eq:MKN-decay}
 |\nabla^jh|=\mathcal{O}(R^{-\delta-j}),\qquad \text{ for }0\leqslant j\leqslant2.
\end{equation}
The volume growth in \eqref{eq:MKN-radius} makes the energy and cutoff error finite for every $\delta>0$.

\begin{remark}
    The asymptotic Kasner
exponents are
\begin{equation}\label{eq:MKN-Kasner}
 (p_\tau,p_z,p_\phi)
 =\frac1{a^2-2a+4}\bigl(2a,\,a^2-2a,\,2(2-a)\bigr).
\end{equation}
 As explained before \cite[Theorem~3 and Remark~4.1]{KRWY}, the metric constructed there corresponds to the Wick rotation of the space-periodic Myers/Korotkin-Nicolai solution with ``equal rod length'' assumed for simplicity. When the rods have different lengths, one directly checks $a\neq1$: the corresponding smooth members mentioned in \cite[Remark~4.1]{KRWY} supply the ends of with curvature of a generic algebraic type asked for in \cite[Question~6 and Section 6.8]{LiSunSurvey}.
\end{remark}

\section{Tensors associated with a Killing field}
\label{sec:response}

Let $K$ be Killing on a Ricci-flat four-manifold and recall the notations
\begin{equation}\label{eq:Pap-data}
 \kappa=K^\flat,\qquad V=|K|^2,\qquad F=d\kappa,\qquad
 F^\pm=\tfrac12(F\pm*F),
\end{equation}
and our conventions: $\Delta=d^*d=-\tr\nabla^2$ and
$(\delta h)_j=-\nabla^ih_{ij}$.

\begin{lemma}\label{lem:Killing-identities}
On a Ricci-flat four-manifold, we have
\begin{align}
 dF=0,\qquad\delta F&=0,\quad \text{ and } \quad\iota_KF=-dV,\label{eq:KM}\\
 \nabla^2V&=\frac12F\circ F-2\Rm(\,\cdot\,,K,\,\cdot\,,K),
 \label{eq:Hess-V}\\
 \Delta V&=-|F|^2,
 \label{eq:Delta-V}\\
 \cE_g(\kappa\otimes\kappa)&=\nabla^2V-F\circ F,
 \label{eq:E-kk}
\end{align}
and for every smooth function $f$, we find
\begin{equation}\label{eq:E-natural}
 \cE_g(\nabla^2f)=\nabla^2(\Delta f),\qquad
 \cE_g(fg)=(\Delta f)g.
\end{equation}
\end{lemma}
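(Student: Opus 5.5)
The plan is to derive everything from the Killing identity $\nabla_i\nabla_jK_l=R_{kijl}K^k$ (sign per the paper's conventions) together with $\nabla_iK_j=\frac12F_{ij}$, since $\nabla K$ is antisymmetric and $F=d\kappa$ gives $F_{ij}=\nabla_iK_j-\nabla_jK_i=2\nabla_iK_j$.

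\textbf{Identities \eqref{eq:KM}.} $dF=dd\kappa=0$ is immediate. $\delta F=0$ is \eqref{eq:killing} with $\Ric=0$. For the last one, $dV=2\nabla_iK_jK^j\,dx^i=F_{ij}K^j\,dx^i$, which equals $-\iota_KF$ (up to the paper's contraction convention, which I will fix so that the sign matches).

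\textbf{Identities \eqref{eq:Hess-V} and \eqref{eq:Delta-V}.} Differentiating $\nabla_jV=2K^p\nabla_jK_p$ once more gives
\[
\nabla_i\nabla_jV=2\nabla_iK^p\nabla_jK_p+2K^p\nabla_i\nabla_jK_p .
\]
The first term equals $\tfrac12F_{ip}F_j{}^p=\tfrac12(F\circ F)_{ij}$. The second term becomes $-2\Rm(\cdot,K,\cdot,K)$ by the Killing identity, where I expect the only delicate point to be tracking the curvature sign convention. Taking the trace, $\tr\,\Rm(\cdot,K,\cdot,K)=\Ric(K,K)=0$. With the normalization $|F|^2=\frac12F_{ij}F^{ij}$ we get $\tr(F\circ F)=2|F|^2$, so $\Delta V=-\tr\nabla^2V=-|F|^2$.

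\textbf{Identity \eqref{eq:E-kk}.} I expand $\nabla^*\nabla(\kappa\otimes\kappa)=(\nabla^*\nabla\kappa)\otimes\kappa+\kappa\otimes\nabla^*\nabla\kappa-2\nabla_a\kappa\otimes\nabla_a\kappa$. For a Killing field on a Ricci-flat manifold, $\nabla^*\nabla\kappa=\Ric(K)=0$. The cross term gives $-2\nabla_aK_i\nabla_aK_j=-\tfrac12F_{ai}F_{aj}=-\tfrac12(F\circ F)_{ij}$. Also $-2\mathring{\Rm}(\kappa\otimes\kappa)_{ij}=-2R_{ikjl}K^kK^l=-2\Rm(\cdot,K,\cdot,K)$. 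Substituting $-2\Rm(\cdot,K,\cdot,K)=\nabla^2V-\tfrac12F\circ F$ from \eqref{eq:Hess-V} yields $\nabla^2V-F\circ F$.

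\textbf{Identities \eqref{eq:E-natural}.} $\cE_g(fg)=(\Delta f)g+\,$a term from $-2\mathring{\Rm}(g)=-2\Ric=0$. For the Hessian, $\nabla^2f=\tfrac12\mathcal L_{\nabla f}g$, and the standard commutation of the Lichnerowicz Laplacian with Lie derivatives on Einstein metrics, $\Delta_L\delta^*=\delta^*\Delta_H$ with $\Delta_H\,df=d\Delta f$ on Ricci-flat metrics, gives $\cE_g\nabla^2f=\nabla^2\Delta f$. Here $\cE_g$ coincides with $\Delta_L$ when $\Ric=0$. Alternatively, one can verify this by commuting covariant derivatives twice; the Ricci terms vanish and the $\nabla\Rm$ terms cancel by the second Bianchi identity together with $\delta\Rm=0$.

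\textbf{Main obstacle.} None of this is deep; the real risk is consistency of sign conventions, namely the contraction defining $\iota_KF$, $\Rm(X,Y,Z,W)=\langle R(X,Y)W,Z\rangle$, and the composition $\circ$.
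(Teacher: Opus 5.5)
Your proposal is correct and follows the paper's proof essentially line by line. It uses the Killing second-derivative identity with $\nabla\kappa=\tfrac12F$ for $\nabla^2V$ and its trace, the product rule for $\nabla^*\nabla(\kappa\otimes\kappa)$ with $\nabla^*\nabla\kappa=\Ric(K)=0$, and the Lichnerowicz commutation identities for $\cE_g(\nabla^2f)$ and $\cE_g(fg)$. The only cosmetic difference is that you get $\iota_KF=-dV$ by a direct index computation, where the paper uses Cartan's formula and $\mathcal L_K\kappa=0$. The sign already comes out right with the standard convention $\iota_KF=F(K,\cdot)$, so no convention needs adjusting.
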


\begin{proof}

First, Cartan's formula and the definition $\iota_K\kappa=V$ give
$\iota_KF = \iota_K d\kappa =\mathcal L_K\kappa-d\iota_K\kappa=-dV$. Indeed, since $\kappa=g(K,\cdot)$,
\[
 \mathcal L_K\kappa=(\mathcal L_Kg)(K,\cdot)
                    +g(\mathcal L_KK,\cdot)=0,
\]
because $\mathcal L_Kg=0$ and $\mathcal L_KK=[K,K]=0$.

For a Killing field, the standard second-derivative identity
\cite[Theorem~1.81(f)]{Besse}, written with our curvature convention, is
\[
 (\nabla^2K)(Y,Z):=\nabla_Y\nabla_ZK-\nabla_{\nabla_YZ}K = R(Y,K)Z,
\]
hence, we get
 $\big\langle(\nabla^2K)(Y,Z),K\big\rangle
 =-\Rm(Y,K,Z,K)$ and consequently
\[
 \nabla^2V(Y,Z)
 =2\langle\nabla_YK,\nabla_ZK\rangle-2\Rm(Y,K,Z,K)
 =\frac12(F\circ F)(Y,Z)-2\Rm(Y,K,Z,K),
\]
% {\color{blue}
% Indeed, since $V=|K|^2$, for any vector field $Z$ one has
%  $dV(Z)=2\langle\nabla_ZK,K\rangle$ and consequently, we find
% \begin{align*}
%  \nabla^2V(Y,Z)
%  &=Y\bigl(dV(Z)\bigr)-dV(\nabla_YZ)\\
%  &=2\big\langle
%    \nabla_Y\nabla_ZK-\nabla_{\nabla_YZ}K,K
%    \big\rangle
%    +2\langle\nabla_ZK,\nabla_YK\rangle\\
%  &=2\langle(\nabla^2K)(Y,Z),K\rangle
%    +2\langle\nabla_YK,\nabla_ZK\rangle.
% \end{align*}
% }
which is \eqref{eq:Hess-V}.
Tracing \eqref{eq:Hess-V} in an orthonormal frame gives
\[
-\Delta V= \tr\nabla^2V
 =\frac12F_{pq}F^{pq}-2\sum_a\Rm(e_a,K,e_a,K)
 =|F|^2-2\Ric(K,K)=|F|^2.
\]

Since $\nabla^*\nabla\kappa=0$, the tensorial product rule and from the Killing equation $\nabla\kappa = \frac12\nabla_{\operatorname{Sym}}\kappa + \frac12 d\kappa = \frac12 d\kappa=\frac12F$, give
\[
 \nabla^*\nabla(\kappa\otimes\kappa)
 =-2\sum_a\nabla_{e_a}\kappa\otimes\nabla_{e_a}\kappa
 =-\frac12F\circ F.
\]
Moreover, we have
$\mathring{\Rm}(\kappa\otimes\kappa)=\Rm(\,\cdot\,,K,\,\cdot\,,K)$,
and hence by \eqref{eq:Hess-V}, we find
\[
 \cE_g(\kappa\otimes\kappa)
 =-\frac12F\circ F-2\Rm(\,\cdot\,,K,\,\cdot\,,K)
 =\nabla^2V-F\circ F.
\]

Finally, the Lichnerowicz commutation identities \cite{Lichnerowicz} imply \eqref{eq:E-natural}.
\end{proof}

\begin{theorem}\label{thm:scalar-response}
Let $c\in\R$.  If $u$ solves $\Delta u=V+2c$, then
\begin{equation}\label{eq:k-def}
 k_K=\nabla^2u-\kappa\otimes\kappa+\frac{V+c}{2}g
\end{equation}
is TT and
\begin{equation}\label{eq:E-k}
 \cE_gk_K=(F\circ F)_0=2F^+\circ F^-.
\end{equation}
\end{theorem}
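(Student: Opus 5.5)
The proof is a direct computation assembled from \cref{lem:Killing-identities}. Three things need checking: that $k_K$ is trace-free, that it is divergence-free, and the value of $\cE_gk_K$.

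\textbf{Trace.} Since $\Delta=-\tr\nabla^2$, we have $\tr\nabla^2u=-\Delta u=-(V+2c)$ and $\tr(\kappa\otimes\kappa)=V$. Also $\tr\bigl(\tfrac{V+c}{2}g\bigr)=2(V+c)$ in dimension four. Hence
\[
\tr k_K=-(V+2c)-V+2(V+c)=0 .
\]

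\textbf{Divergence.} I will compute each term separately.
\begin{itemize}
\item The Ricci commutation formula with $\Ric=0$ gives $\delta\nabla^2u=-\nabla^i\nabla_i\nabla_ju=d(\Delta u)=dV$.
\item For the pure-trace term, $\delta(fg)=-df$, so $\delta\bigl(\tfrac{V+c}{2}g\bigr)=-\tfrac12 dV$.
\item For $\kappa\otimes\kappa$, we have $-\nabla^i(\kappa_i\kappa_j)=-(\nabla^i\kappa_i)\kappa_j-\kappa^i\nabla_i\kappa_j$. The first term vanishes because $\delta\kappa=0$. For the second, the Killing equation gives $\nabla\kappa=\tfrac12F$, so $\kappa^i\nabla_i\kappa_j=\tfrac12(\iota_KF)_j=-\tfrac12 dV_j$ by \eqref{eq:KM}. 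Hence $\delta(\kappa\otimes\kappa)=\tfrac12 dV$.
\end{itemize}
Summing with the signs in \eqref{eq:k-def}:
\[
\delta k_K=dV-\tfrac12dV-\tfrac12dV=0 .
\]
I expect this index bookkeeping, and the matching of the convention $\nabla\kappa=\tfrac12F$ with $\iota_KF=-dV$, to be the main place where a sign could go wrong. I would double-check it on a flat rotation field $K$ in $\R^4$.

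\textbf{Linearised operator.} Apply \eqref{eq:E-natural} and \eqref{eq:E-kk}. The Hessian term gives $\cE_g\nabla^2u=\nabla^2(\Delta u)=\nabla^2V$, since $c$ is constant. The pure-trace term gives $\cE_g\bigl(\tfrac{V+c}{2}g\bigr)=\tfrac12(\Delta V)g=-\tfrac12|F|^2g$ by \eqref{eq:Delta-V}. Therefore
\[
\cE_gk_K=\nabla^2V-(\nabla^2V-F\circ F)-\tfrac12|F|^2g=F\circ F-\tfrac12|F|^2 g .
\]
With the convention $(F\circ F)_{ij}=F_i{}^pF_{jp}$, we get $\tr(F\circ F)=F_{pq}F^{pq}=2|F|^2$. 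Hence the trace-free part is $(F\circ F)_0=F\circ F-\tfrac14\cdot2|F|^2g$, which is exactly the expression above.

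\textbf{Identification with $2F^+\circ F^-$.} The last equality $(F\circ F)_0=2F^+\circ F^-$ is LeBrun's identity \eqref{eq:intro-Maxwell-algebra}: its left-hand side $F_{ip}F_j{}^p-\tfrac14F_{pq}F^{pq}g_{ij}$ is precisely $(F\circ F)_0$. For a self-contained check, expand $F\circ F$ in $F=F^++F^-$. The products $F^\pm\circ F^\pm$ are pure trace, by the quaternionic relations on $\Lambda^\pm$. The two mixed products are equal and symmetric, because $\Lambda^+$ and $\Lambda^-$ commute as endomorphisms.
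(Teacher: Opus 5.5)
Your proposal is correct and follows the paper's proof essentially step for step. It uses the same trace count, the same term-by-term divergence computation based on $\delta\kappa=0$, $\nabla\kappa=\tfrac12F$ and $\iota_KF=-dV$, and the same evaluation of $\cE_gk_K$ via \eqref{eq:E-natural}, \eqref{eq:E-kk} and \eqref{eq:Delta-V}, closed off by LeBrun's identity \eqref{eq:intro-Maxwell-algebra}. Your optional self-contained check of $(F\circ F)_0=2F^+\circ F^-$ is fine, though it also needs that $F^+\circ F^-$ is trace-free, which holds because $\Lambda^+\perp\Lambda^-$.
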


\begin{proof}
By tracing and using $\tr\nabla^2u=-\Delta u$, we find
\[
 \tr k_K=-(V+2c)-V+2(V+c)=0.
\]
On a Ricci-flat metric,
$\delta\nabla^2u=d\Delta u=dV$.  Since $\delta\kappa=0$,
$\nabla\kappa=\frac12F$, and $\iota_KF=-dV$,
\[
 \delta(\kappa\otimes\kappa)
 =(\delta\kappa) \kappa-\nabla_K\kappa=-\nabla_K\kappa
 =-\frac12\iota_KF
 =\frac12dV,
 \qquad
 \delta\!\left(\frac{V+c}{2}g\right)=-\frac12dV.
\]
Thus $\delta k_K=0$.
Using \cref{lem:Killing-identities} and
$\Delta u=V+2c$,
\[
\begin{aligned}
 \cE_gk_K
 &=\nabla^2(\Delta u)-\bigl(\nabla^2V-F\circ F\bigr)
   +\frac12(\Delta V)g\\
 &=F\circ F-\frac12|F|^2g
  =(F\circ F)_0
  =2F^+\circ F^-,
\end{aligned}
\]
where the last equality is \eqref{eq:intro-Maxwell-algebra}.
\end{proof}

\begin{remark}\label{rem:local-identity}
The identity \eqref{eq:E-k} is entirely local: it requires only a
solution of the scalar Poisson equation $\Delta u=V+2c$. 
\end{remark}

\subsection{An Einstein-Maxwell interpretation}
\label{subsec:EM}

Equation \eqref{eq:E-k} is exactly the first term in a formal Einstein-Maxwell deformation.  With the normalization of \cite[Eqs.~(1)-(3)]{LeBrunEM},
\begin{equation}\label{eq:EM-jet}
 g_t=g+t^2k_K+\mathcal{O}(t^4),\qquad
 A_t=\tfrac{t}{\sqrt2}\kappa+\mathcal{O}(t^3)
\end{equation}
satisfies Einstein-Maxwell's equation to leading order with $F_t = dA_t$.  This matches nicely with the Einstein-Maxwell variations considered in \cite{Harrison,ErnstMagnetic} whose metric part
\[
 g_t =\Lambda_t^2(g-V^{-1}\kappa\otimes\kappa)
              +\Lambda_t^{-2}V^{-1}\kappa\otimes\kappa,
 \qquad \Lambda_t=1+\tfrac{t^2V}{4},
\]
has second-order coefficient
$\frac{d^2}{dt^2}\big|_{t=0}g_t = Vg-2\kappa\otimes\kappa$.  The remaining term $\nabla^2u+\tfrac c2g$ in \eqref{eq:k-def} is the diffeomorphism plus homothety needed to place the coefficient in TT gauge.  Analogous exact Riemannian Einstein-Maxwell families
include the Reissner-Taub-NUT metrics \cite{Page} and the recent examples of Araneda-Dunajski
\cite{AD}. 

\subsection{Comparison with Biquard-Ozuch deformation}

Suppose $g=f^2\widetilde g$ is Ricci-flat and
$(\widetilde g,J,\widetilde\omega)$ is K\"ahler, with
$f^{-1}=\operatorname{Scal}_{\widetilde g}/6$ as in \cite{BO}.
Their distinguished asymptotically constant Killing field is
\begin{equation}\label{eq:BO-X}
 X=J\nabla^gf.
\end{equation}
If $F_X=dX^\flat$, then a direct K\"ahler calculation gives
\begin{equation}\label{eq:BO-splitting}
 F_X^+=\frac12\widetilde\omega.
\end{equation}
Denote $h_0$ the TT projected tensor used in \cite{BO}, one checks that
\begin{equation}\label{eq:BO-identification}
\cE h_0=4F_X^+\circ F_X^-.
\end{equation}
Consequently $h_0/2$ and the tensor $k_X$ in
\cref{thm:scalar-response} solve the same equation.

\subsection{A higher-dimensional variant}
\label{subsec:higher-dimensional-variant}

Let $(M^n,g)$ be Ricci-flat, let $K$ be Killing, and set again
\[
 \kappa=K^\flat,\qquad V=|K|^2,\qquad F=d\kappa.
\]

Unlike in dimension $4$, the traceless symmetric $2$-tensor
$(F\circ F)_0$ is not divergence-free and the divergence-free Maxwell tensor $F\circ F-\frac12|F|^2g$ is not traceless, so we will instead target the TT
\[
 (F\circ F)_0-\frac{n-4}{2(n-1)}(\nabla^2V)_0.
\]
When $n=4$ it is equal to the TT tensor
$(F\circ F)_0=F\circ F-\frac12|F|^2g$.

Introduce the following numbers
\begin{equation}\label{eq:higher-coefficients}
 a_n=\frac{n+2}{2(n-1)},\qquad \text{ and }\qquad
 b_n=\frac{3}{2(n-1)}.
\end{equation}
Then, as in the previous section, we construct a TT preimage of the tensor $(F\circ F)_0-\frac{n-4}{2(n-1)}(\nabla^2V)_0$.

\begin{proposition}\label{prop:higher-dimensional-TT}
If $\Delta u=a_nV+\lambda$, then
\begin{equation}\label{eq:higher-k-def}
 k=\nabla^2u-\kappa\otimes\kappa
       +\left(b_nV+\frac{\lambda}{n}\right)g
\end{equation}
is TT and
\begin{equation}\label{eq:higher-Ek}
 \cE_gk=(F\circ F)_0
       -\frac{n-4}{2(n-1)}(\nabla^2V)_0.
\end{equation}
\end{proposition}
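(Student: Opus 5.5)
The plan is to repeat the proof of \cref{thm:scalar-response} verbatim, replacing the four-dimensional numerology by the coefficients $a_n,b_n$. The first step is to observe that every identity of \cref{lem:Killing-identities} holds in arbitrary dimension $n$ on a Ricci-flat manifold, since none of their proofs used $n=4$ or self-duality.
\begin{itemize}
\item $\iota_KF=-dV$ comes from Cartan's formula.
\item $\nabla^2V=\frac12F\circ F-2\Rm(\cdot,K,\cdot,K)$ comes from Besse's Killing identity.
\item $\Delta V=-|F|^2$ follows by tracing, using $\tr(F\circ F)=2|F|^2$ with our normalization and $\Ric=0$.
\item $\cE_g(\kappa\otimes\kappa)=\nabla^2V-F\circ F$ comes from $\nabla\kappa=\frac12F$.
\item The Lichnerowicz commutations $\cE_g(\nabla^2f)=\nabla^2\Delta f$ and $\cE_g(fg)=(\Delta f)g$ hold on any Ricci-flat manifold.
\end{itemize}
Only the Maxwell algebra \eqref{eq:intro-Maxwell-algebra} is genuinely four-dimensional, and it will not be used.

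\textbf{Trace and divergence.} For the trace, $\tr\nabla^2u=-\Delta u=-(a_nV+\lambda)$, $\tr(\kappa\otimes\kappa)=V$ and $\tr g=n$. This gives
\[
\tr k=V\bigl(-a_n-1+nb_n\bigr)=V\,\frac{-(n+2)-2(n-1)+3n}{2(n-1)}=0 .
\]
For the divergence, I use $\delta\nabla^2u=d\Delta u=a_n\,dV$ (Ricci-flatness), $\delta(\kappa\otimes\kappa)=\frac12dV$ (as in the proof of \cref{thm:scalar-response}), and $\delta(fg)=-df$. Then
\[
\delta k=\Bigl(a_n-\tfrac12-b_n\Bigr)dV=\frac{(n+2)-(n-1)-3}{2(n-1)}\,dV=0 .
\]
So the two constraints $\tr k=0$ and $\delta k=0$ are exactly what fix $a_n$ and $b_n$.

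\textbf{Computing $\cE_gk$.} By linearity and the identities above, noting that the constant $\lambda/n$ times $g$ is annihilated,
\[
\cE_gk=a_n\nabla^2V-\nabla^2V+F\circ F+b_n(\Delta V)g=F\circ F+\frac{4-n}{2(n-1)}\nabla^2V-b_n|F|^2g .
\]
It remains to compare this with the target tensor. I expand $(F\circ F)_0=F\circ F-\frac2n|F|^2g$ and $(\nabla^2V)_0=\nabla^2V+\frac{\Delta V}{n}g=\nabla^2V-\frac1n|F|^2g$. The coefficient of $|F|^2g$ in $(F\circ F)_0-\frac{n-4}{2(n-1)}(\nabla^2V)_0$ is then
\[
-\frac2n-\frac{4-n}{2n(n-1)}=-\frac{3}{2(n-1)}=-b_n ,
\]
which matches, and this proves \eqref{eq:higher-Ek}.

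The main obstacle is not conceptual but bookkeeping: confirming that no step of \cref{lem:Killing-identities} secretly used $n=4$, and keeping the normalization $|F|^2=\frac12F_{ij}F^{ij}$ consistent in the traces. I would double-check the Lichnerowicz commutation $\cE_g(\nabla^2f)=\nabla^2\Delta f$ in general dimension, which holds for Ricci-flat metrics. As a sanity check, at $n=4$ one has $a_4=1$ and $b_4=\frac12$, and the formula reduces to \cref{thm:scalar-response} with $\lambda=2c$.
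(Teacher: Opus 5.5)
Your proposal is correct and takes the approach the paper intends. The paper gives no separate proof of this proposition; it refers back to the proof of \cref{thm:scalar-response}, whose ingredients from \cref{lem:Killing-identities} are indeed dimension-independent, and your trace, divergence and $\cE_g k$ computations with $a_n=\frac{n+2}{2(n-1)}$ and $b_n=\frac{3}{2(n-1)}$ are all correct.
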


We can now prove \cref{thm:higher-dimensional-alf} by using $k$ as our test tensor.

\begin{theorem}[{\cref{thm:higher-dimensional-alf}}]
\label{thm:higher-dimensional-alf-bis}
Let $n\geqslant5$ and assume that the end and a Killing field
$K$ satisfy \eqref{eq:higher-ALF-decay}.  Then, stability forces $K$ to be parallel and the universal cover
to split a line:
\begin{equation}\label{eq:higher-splitting}
 (\widetilde M,\widetilde g)
 \approx(\mathbb R,dt^2)\times(N^{n-1},h).
\end{equation}
\end{theorem}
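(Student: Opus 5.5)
The plan is to use the tensor $k$ of \cref{prop:higher-dimensional-TT} as the test tensor, after choosing $u$ so that $k$ decays at infinity. Then I would compute $\cQ_g(k)$ by integrating by parts against the explicit right-hand side \eqref{eq:higher-Ek}, and show that it is a negative multiple of an integral that vanishes only when $F=d\kappa\equiv0$.

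\emph{Step 1: choice of $u$ and decay of $k$.} On the end, $V\to1$ and $\kappa\otimes\kappa\to d\theta^2$. A short computation gives $b_n-a_n/n=\frac1n$, so a naive bounded $u$ would leave $k\to -d\theta^2+\frac1n g$, which is not in $L^2$. I would instead put a quadratic profile in $u$. With $r$ the radius on the $\R^{n-1}$ factor, we have $\nabla_b^2(r^2)=2(b-d\theta^2)$ and $\Delta_b r^2=-2(n-1)$. So $u=-\tfrac12 r^2\chi+v$, with $\chi$ a cutoff equal to $1$ on the end, cancels the $d\theta^2$ part. The trace-matching constant is then forced to be
\[
 \lambda=n(1-b_n)=n-1-a_n,
\]
and one checks that these two expressions agree. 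With this $\lambda$, the equation becomes $\Delta v=a_nV+\lambda-\Delta(-\tfrac12r^2\chi)=\mathcal O(r^{-\tau})$, using \eqref{eq:higher-ALF-decay}. I would solve for $v$ in weighted Hölder spaces on the generalized AF end. The effective dimension is $n-1\geqslant4$, and the Laplacian is Fredholm and surjective between the appropriate weights, so $\nabla^2v=\mathcal O(r^{-\tau-2+\epsilon})$ up to lower-order harmonic terms that I must exclude by choosing the weight. Then $k=\mathcal O(r^{-\tau})$ and $\nabla k=\mathcal O(r^{-\tau-1})$. Since $\tau>\frac{n-3}{2}$ and the volume growth is $r^{n-1}$, both $k$ and $\nabla k$ lie in $L^2$, so $\cQ_g(k)$ is finite. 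The same decay kills the boundary terms in the integrations by parts below on large cutoff regions. I expect this weighted analysis to be the main technical obstacle. In particular, I must check that no $\log r$ or growing harmonic terms in $v$ spoil the decay of $\nabla^2u-\kappa\otimes\kappa$.

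\emph{Step 2: computing $\cQ_g(k)=\int\langle(F\circ F)_0-\tfrac{n-4}{2(n-1)}(\nabla^2V)_0,k\rangle$.} I would pair the right-hand side with each of the three pieces of $k$.
\begin{itemize}
\item Against $\nabla^2u$: the Maxwell tensor $F\circ F-\tfrac12|F|^2g$ is divergence-free because $dF=\delta F=0$. This reduces the $F\circ F$ pairing to scalar integrals of $|F|^2\Delta u=-\Delta V\,(a_nV+\lambda)$, which equal $-a_n\int|dV|^2$ up to boundary terms. The term $\langle\nabla^2V,\nabla^2u\rangle$ integrates by parts, by Bochner on a Ricci-flat metric, to $\int\Delta V\,\Delta u$, which again reduces to $\int|dV|^2$.
\item Against $\kappa\otimes\kappa$: we have $\langle F\circ F,\kappa\otimes\kappa\rangle=|\iota_KF|^2=|dV|^2$. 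For $\nabla^2V(K,K)$ I would use \eqref{eq:Hess-V}, i.e.\ $\frac12|\iota_KF|^2-2\Rm(K,K,K,K)=\tfrac12|dV|^2$.
\item Against $g$: this piece drops out because the right-hand side is trace-free.
\end{itemize}
Collecting the terms, I expect an identity of the form $\cQ_g(k)=-c_n\int_M|dV|^2+(\text{terms in }V|F|^2)$ with an explicit $c_n>0$. Any leftover $\int V|F|^2=-\int V\Delta V=-\int|dV|^2$ should be absorbed by one more integration by parts, with care because $V\to1$ does not decay. The goal is $\cQ_g(k)=-c_n\int_M|dV|^2\leqslant0$.

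\emph{Step 3: equality case.} If $\cQ_g(k)\geqslant0$, stability together with Step 2 forces $dV\equiv0$. Then \eqref{eq:Delta-V} gives $|F|^2=-\Delta V=0$, so $\nabla\kappa=\frac12F=0$ and $K$ is parallel and nonzero. The de Rham decomposition applied on the universal cover then splits off the line generated by $K$, which gives \eqref{eq:higher-splitting}. If the sign bookkeeping in Step 2 does not close, I would fall back on testing with $k+\epsilon\psi$ for a compactly supported TT correction $\psi$ to recover strict negativity whenever $F\not\equiv0$.
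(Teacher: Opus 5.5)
\textbf{Verdict: there is a genuine gap in Step 2, which is the heart of the proof.} Your overall route matches the paper's: the same tensor $k$ of \cref{prop:higher-dimensional-TT}, $u=-\tfrac12r^2+w$, $\lambda=n-1-a_n$, and pairing $\cE_gk$ against $k$. The failure is that you lose the flux of $V$ at infinity.

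\textbf{The missing flux.} Since $V\to1$ and $\Delta V=-|F|^2$, one has $\lim_{R\to\infty}\int_{\partial M_R}\partial_\nu V\,dA=\int_M|F|^2\,dv_g$. This is nonzero unless $F\equiv0$.
\begin{itemize}
\item Your reduction $\int|F|^2\Delta u=-\int\Delta V(a_nV+\lambda)=-a_n\int|dV|^2$ ``up to boundary terms'' is literally true. But those boundary terms tend to $(a_n+\lambda)\int_M|F|^2=(n-1)\int_M|F|^2$, not to zero, so the decay of $k$ does not kill them.
\item The identity $\int V|F|^2=-\int|dV|^2$ is false; its left side is even nonnegative.
\end{itemize}
The correct relation comes from integrating against the decaying function $1-V$: $\int_M(1-V)|F|^2\,dv_g=\int_M|dV|^2\,dv_g$.

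\textbf{The correct computation.} It uses the following facts:
\begin{itemize}
\item $\int\langle(\nabla^2V)_0,k\rangle=\lim_R\int_{\partial M_R}k(\nabla V,\nu)\,dA=0$, in one step because $k$ is TT;
\item $\int\langle F\circ F,\nabla^2u\rangle=-\tfrac12\int(a_nV+\lambda_n)|F|^2$, from $\delta(F\circ F)=-\tfrac12d|F|^2$;
\item $(F\circ F)(K,K)=|dV|^2$ and $\tr(F\circ F)=2|F|^2$.
\end{itemize}
Together with the flux identity above, these give
\[
 \cQ_g(k)=-\frac{3(n+2)}{4(n-1)}\int_M|dV|^2\,dv_g-\frac{n-5}{2}\int_M|F|^2\,dv_g .
\]
The second term is exactly where $n\geqslant5$ enters. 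Your target $\cQ_g(k)=-c_n\int|dV|^2$ omits it, and nothing in your argument uses $n\geqslant5$.

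\textbf{Why this cannot be patched by more care with signs.} At $n=4$ the same bookkeeping gives $\cQ_g(k_K)=-\tfrac32\int|dV|^2+\tfrac12\int|F|^2$ (see \cref{rem:why-not-four}). On Taub--NUT with its triholomorphic Killing field, $F^+\circ F^-=0$, so $\cE_gk_K=0$ and $\cQ_g(k_K)=0$, although $dV\neq0$. A dimension-blind derivation ending in $-c_n\int|dV|^2$ with $c_n>0$ would therefore make a stable hyperk\"ahler metric unstable.

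\textbf{Related problems in your plan.}
\begin{itemize}
\item Your piece-by-piece pairing of $(\nabla^2V)_0$ has the same defect. For example, $\nabla^2u\to-g_{\R^{n-1}}$ does not decay, so $\int\langle\nabla^2V,\nabla^2u\rangle=\int\Delta V\Delta u$ holds only up to boundary terms that are nonzero multiples of $\int_M|F|^2$.
\item The fallback $k+\epsilon\psi$ cannot repair the sign. It yields negativity only when $\cQ_g(k)=0$ and $\cE_gk\neq0$; it cannot overturn a positive value.
\end{itemize}

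\textbf{What remains fine.} Step 1 is fine. Step 3 also goes through once the correct identity is in hand: for $n\geqslant5$ both coefficients are nonpositive and the first is strictly negative, so stability forces $dV=0$, hence $F=0$.
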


\begin{proof}
Choose $\lambda_n=n-1-a_n=\frac{n(2n-5)}{2(n-1)}$ and let $u$ and $k$ be given by
\cref{prop:higher-dimensional-TT,subsec:higher-ALF-analysis}.
The estimates there justify all the following integrations by parts
and make the boundary terms vanish.  Since $\Delta V=-|F|^2$ and
$V\to1$ fast enough by \cref{subsec:higher-ALF-analysis}, integrating by parts against $V$ yields
\begin{equation}\label{eq:higher-flux}
 \int_M|F|^2\,dv_g
 =\int_M|dV|^2\,dv_g+\int_MV|F|^2\,dv_g.
\end{equation}

The only difference from the 4D pairing is the Hessian term in \eqref{eq:higher-Ek}, but since $k$ is TT, we find
\[
 \int_M\langle(\nabla^2V)_0,k\rangle\,dv_g
 =\lim_{R\to\infty}\int_{\partial M_R}
       k(\nabla V,\nu)\,dA=0.
\]
Consequently, we finally find
\[
 \cQ_g(k)=\int_M\langle F\circ F,k\rangle\,dv_g.
\]
Now, from
 $\delta(F\circ F)=-\frac12d|F|^2,
 $ $
 (F\circ F)(K,K)=|dV|^2$ and $
 \tr_g(F\circ F)=2|F|^2,$ we deduce
\[
 \int_M\langle F\circ F,\nabla^2u\rangle\,dv_g
 =-\frac12\int_M(a_nV+\lambda_n)|F|^2\,dv_g.
\]
Combined with \eqref{eq:higher-k-def}, we therefore obtain
\begin{align}
 \cQ_g(k)
 &=-\int_M|dV|^2\,dv_g
   +\left(2b_n-\frac{a_n}{2}\right)
      \int_MV|F|^2\,dv_g \notag\\
 &\quad
   +\left(\frac{2\lambda_n}{n}-\frac{\lambda_n}{2}\right)
      \int_M|F|^2\,dv_g \notag\\
 &=-\frac{3(n+2)}{4(n-1)}
      \int_M|dV|^2\,dv_g
   -\frac{n-5}{2}\int_M|F|^2\,dv_g.
 \label{eq:higher-energy-signed}
\end{align}
where the last equality uses \eqref{eq:higher-flux} and
\eqref{eq:higher-coefficients}.  For $n\geqslant5$, stability thus
forces $dV=0$.  The identity $\Delta V=-|F|^2$ then gives $F=0$, so
$K$ is parallel.
\end{proof}

\begin{remark}\label{rem:why-not-four}
For $n=4$ one has
 $a_4=1,$ $ b_4=\frac12$ and $ \lambda_4=2,$
so $k=k_K$ and the Hessian correction in
\eqref{eq:higher-Ek} disappears.  The same computation gives
\[
 \cQ_g(k_K)
 =-\frac32\int_M|dV|^2\,dv_g
   +\frac12\int_M|F|^2\,dv_g
 =-\int_M|dV|^2\,dv_g
   +\frac12\int_MV|F|^2\,dv_g.
\]
This is not enough to deduce a sign.  This is precisely where four-dimensional selfduality enters: summing the identities
$|d(V\mp\chi)|^2=2V|F^\pm|^2$ gives
 $V|F|^2=|dV|^2+|d\chi|^2.$
Consequently, we recover the expression of \cref{thm:energy-identity}:
\[
 \cQ_g(k_K)
 =-\frac12\int_M\bigl(|dV|^2-|d\chi|^2\bigr)\,dv_g
 =-\frac12I_K.
\]
\end{remark}

We now deduce consequences for metrics carrying a parallel spinor, which are semistable by a noncompact version of \cite{DaiWangWei}.
\begin{proof}[Proof of \cref{cor:higher-special-holonomy}]
Dai-Wang-Wei provide a factorization of $2$-tensors in the presence of a parallel spinor in\cite[Lemma~2.4]{DaiWangWei}. The cutoff estimates of \eqref{eq:cutoff-spinor} give
\[
 0\leqslant\lim_{R\to+\infty}\cQ_g(\chi_Rk)=\cQ_g(k).
\]
Thus \eqref{eq:higher-energy-signed} forces $dV=0$, $F=0$, hence $K$ is parallel. In a universal cover, the metric is the product of a line and a Ricci-flat which is asymptotically Euclidean, hence Euclidean.
\end{proof}

\begin{remark}
The irreducible ALC $\mathrm G_2$ metrics of \cite{FHN21} do not satisfy \eqref{eq:higher-ALF-decay}, nor does it satisfy the ALC version consisting of replacing $g_{\mathbb R^{n-1}}$ by a Ricci-flat cones for which most of our analysis extends verbatim. A stronger similar result for ALC $G_2$ manifold is \cite[Theorem~C]{FHN26}.
\end{remark}

\subsection{Instability of Riemannian Myers-Perry and Schwarzschild-Tangherlini metrics}\label{sec:myers-perry}

The argument applies directly to the real Riemannian section of the Myers-Perry family \cite{MyersPerry,DowkerGauntlettGibbonsHorowitz}, which generalize the Riemannian version of Kerr metrics in dimension $4$. Its regular end is a flat quotient rather than literally the product in \eqref{eq:higher-ALF-decay}, but the potential $u$ is explicit, so $k$ can be computed explicitly and has the required decay. Consider the Myers-Perry metrics

\begin{equation}\label{eq:euclidean-myers-perry}
\begin{aligned}
g_{\mathrm{MP}}
={}&\tfrac{\Sigma}{D(r)}\,dr^2+\Sigma\,d\vartheta^2
+r^2\cos^2\vartheta\,g_{\mathbb S^{n-4}}+\tfrac{D(r)}{\Sigma}
 \bigl(d\tau-\alpha\sin^2\vartheta\,d\varphi\bigr)^2+\tfrac{\sin^2\vartheta}{\Sigma}
 \bigl((r^2-\alpha^2)d\varphi+\alpha\,d\tau\bigr)^2 .
\end{aligned}
\end{equation}
where
\[
 \Sigma=r^2-\alpha^2\cos^2\vartheta,\qquad
 D(r)=r^2-\alpha^2-mr^{5-n},\qquad
 \rho^2=r^2-\alpha^2\sin^2\vartheta,\qquad
 V=1-\frac{mr^{5-n}}{\Sigma}.
\]
In particular, for $K=\partial_\tau$, $|K|_g^2=g_{\mathrm{MP}}(\partial_\tau,\partial_\tau)=V.$ Let $r_0$ be the largest zero of $D$.  Then
\[
 u=-\frac{\rho^2}{2}
 +\frac{3(n-2)m}{4(n-1)}
  \int_{r_0}^r
  \frac{r_0^2-s^2}{s^{n-4}D(s)}\,ds
\]
satisfies $\Delta_gu=a_nV+\lambda_n$.  Indeed, if $w=u+\rho^2/2$, then from $\Delta_g(-\frac{\rho^2}{2})=(n-1)-2(1-V),$ we find the explicit
\[
 \Delta_gw=-\frac{(r^{n-4}D w')'}{\Sigma r^{n-4}}
            =(2-a_n)(1-V).
\]
which can be integrated and yields $\nabla^2w=\mathcal O(r^{3-n})$. Thus every
smooth nonflat real Riemannian Myers-Perry metric is linearly unstable. 

When $\alpha=0$, there is more freedom to obtain smooth metrics from \eqref{eq:euclidean-myers-perry}. Namely, for $(Y^{n-2},h)$ compact Einstein with $\Ric_h=(n-3)h$, then the full Schwarzschild-Tangherlini family of metrics, considered in \cite{GibbonsHartnoll,GibbonsHartnollPope}, is
\begin{equation}\label{eq: STeuc}
     g_{ST}=f\,d\tau^2+f^{-1}dr^2+r^2h,\qquad \text{with }
 f=1-\left(\frac{r_0}{r}\right)^{n-3},
\end{equation}
is smooth and Ricci-flat when $\tau$ is chosen to be $4\pi r_0/(n-3)$-periodic. The preceding function $u$ still satisfies $\Delta_{g_{ST}}u=a_nV+\lambda_n$ with $\alpha=0$ since the radial Laplacian is independent of $h$.  Hence the same tensor $k$ has the same decay, is TT and $\cQ_{g_{ST}}(k)<0$ for $n\geqslant5$.

\section{Identities on gravitational instantons and proof of the main theorem}
\label{sec:global-sign}

\subsection{Scalar potentials and main result}

Throughout this section, $K$ is a Killing field on the Ricci-flat four-manifold $(M,g)$, and $u$ and $\chi$ satisfy the equations
\begin{equation}\label{eq:intro-local-equations}
 \Delta u=V+2,\qquad \text{ and }\qquad d\chi=\iota_K(*F),
\end{equation} on $M$.  The existence of $\chi$ and $u$ on ALF manifolds is deferred to \cref{sec:end-analysis}.

\begin{proposition}[\cref{sec:end-analysis}]
\label{prop:analytic-input}
Let $(M^4,g)$ be a complete Ricci-flat ALF manifold satisfying
\eqref{eq:intro-end}, and let $K$ be a nonzero bounded Killing field.
After multiplying $K$ by a nonzero constant, for every
$0<\varepsilon<\frac12$ one has, on the end,
\[
 |\nabla_b^j(K-T)|_b+|\nabla_b^j(V-1)|_b
   =\mathcal O(r^{-1-j}),\qquad \text{ and }\qquad
 |\nabla_b^jF|_b=\mathcal O(r^{-2-j}),\qquad \text{for }j\geqslant0.
\]
There are global functions $u$ and $\chi$ respectively satisfying 
 $\Delta u=V+2$ and $d\chi=\iota_K(*F)$ and for every $j\geqslant0$,
\[
 |\nabla_b^j(u+r^2/2)|_b=\mathcal O(r^{1+\varepsilon-j}),
 \qquad
 |\nabla_b^j\chi|_b=\mathcal O(r^{-1+\varepsilon-j}).
\]
The symmetric $2$-tensor
 $k_K=\nabla^2u-\kappa\otimes\kappa+\frac{V+1}{2}g$ satisfies
 $$|\nabla^jk_K|=\mathcal O(r^{-1+\varepsilon-j})\qquad \text{ for }0\leqslant j\leqslant2.$$ In particular, all of the integrals used below converge: for $M_R=\{r<R\}$, we have as $R\to+\infty$
\begin{equation}\label{eq:standing-fluxes}
\begin{split}
 &\int_{\partial M_R}|k_K|\,|\nabla k_K|\,dA\longrightarrow0,\\
 &\int_{\partial M_R}
   |(F^+\circ F^-)(\nabla u,\nu)|\,dA\longrightarrow0,\\
 &\int_{\partial M_R}|1-V\pm\chi|\,
   |\partial_\nu(V\pm\chi)|\,dA\longrightarrow0.
\end{split}
\end{equation}
\end{proposition}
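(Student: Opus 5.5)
Our approach is to treat the end in three successive layers: first the Killing field, then the potentials $\chi$ and $u$ via weighted Fredholm theory, and finally the decay of $k_K$ and the fluxes, which follow from the first two.

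\textbf{Asymptotics of the Killing field.} On the end, write $g=b+\mathcal O(r^{-1})$ using \eqref{eq:intro-end}. A bounded Killing field satisfies the elliptic system $\nabla^*\nabla K=-\Ric(K)=0$ together with the Killing identity $\nabla^2K=R(\cdot,K)\cdot$. Since $|\Rm|=\mathcal O(r^{-3})$, integrating this identity along rays shows that $\nabla K$ has a limit. Boundedness of $K$ forces that limit to vanish, and then $|\nabla K|=\mathcal O(r^{-2})$.

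Next we compare $K$ with the Killing fields of the model $b$. The bounded Killing fields of $dr^2+r^2\gamma+\eta^2$ are spanned by $T$, possibly together with a rotation in the flat product case, and rotations are unbounded. Bounded solutions of the linearized equation are therefore asymptotic to $cT$ plus a decaying term. Expanding $K$ in the harmonic decomposition of $\nabla_b^*\nabla_b$ on the end, the difference $K-cT$ satisfies an equation whose source is $\mathcal O(r^{-2})$, which gives $K-cT=\mathcal O(r^{-1})$. We must have $c\neq0$, since otherwise $K$ would decay and the maximum principle applied to $\Delta V=-|F|^2\le0$ would give $K\equiv0$. We rescale so that $c=1$. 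The estimates for $V-1=2\langle K-T,T\rangle_b+\dots$ and for $F=d\kappa=d\eta+\mathcal O(r^{-2})$, where $d\eta=\mathcal O(r^{-2})$ in the ALF model, then follow, and higher derivatives come from elliptic estimates on unit-size balls.

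\textbf{Construction of $\chi$ and $u$.} For $\chi$, note that $d(\iota_K{*F})=\mathcal L_K{*F}-\iota_Kd{*F}=0$ by \eqref{eq:KM} and invariance. So $\iota_K{*F}$ is closed and of size $\mathcal O(r^{-2})$, and we need it to be exact. On the end we integrate radially from infinity, which yields a primitive of order $r^{-1+\varepsilon}$; the small loss $\varepsilon$ absorbs logarithms coming from the $\gamma$-harmonic modes. Globally, the obstruction is the cohomology class $[\iota_K{*F}]\in H^1(M)$. For a Killing field we have $\iota_K{*F}=*(\kappa\wedge F)$ up to sign, and $\kappa\wedge F$ is closed modulo $d(\kappa\wedge\kappa)=0$. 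We would argue that the class pairs trivially with loops, using $L^2$ Hodge theory on ALF spaces: bounded harmonic 1-forms vanish, and a closed form of order $r^{-2}$ that is also coclosed is $L^2$-harmonic. If this step fails, we would replace $M$ by a cover.

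For $u$, we subtract the model solution. Since $\Delta_b(-r^2/2)=3$ for the model (dimension $3$ transverse to the fibre), we get $\Delta_g(-\chi_0 r^2/2)=3+\mathcal O(r^{-1})$ with $\chi_0$ a cutoff. The equation for $w=u+r^2/2$ then has right-hand side $V-1+\mathcal O(r^{-1})=\mathcal O(r^{-1})$. Weighted Fredholm theory for $\Delta$ on ALF manifolds, in the style of Minerbe and Hein, gives surjectivity onto $r^{-1}$-decaying functions with solutions in $r^{1+\varepsilon}$ for noncritical weights. This produces $w=\mathcal O(r^{1+\varepsilon})$ with the corresponding derivative bounds.

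\textbf{Decay of $k_K$ and the fluxes.} With the solutions in hand, $k_K=\nabla^2u-\kappa\otimes\kappa+\frac{V+1}{2}g$ has leading model part $-\nabla_b^2(r^2/2)-\eta\otimes\eta+g_b$. This vanishes identically because $\nabla_b^2(r^2/2)=dr^2+r^2\gamma$ up to fibre terms of order $r^{-1}$, so $k_K=\mathcal O(r^{-1+\varepsilon})$. The flux limits then follow by volume counting: $\partial M_R$ has area $\sim R^2$, and each integrand decays like $R^{-3+2\varepsilon}$ or better. This includes $(F^+\circ F^-)(\nabla u,\nu)=\mathcal O(r^{-4}\cdot r)$ and $|1-V\pm\chi|\,|\partial(V\pm\chi)|=\mathcal O(r^{-1+\varepsilon}r^{-2})$.

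The main obstacle is the first step. Showing that a bounded Killing field is asymptotic to a constant multiple of $T$ at rate $r^{-1}$ requires excluding rotational and other slowly decaying modes. It also requires handling the general Biquard--Gauduchon ends with twisted $\eta$, where the asymptotic Killing algebra must be computed carefully.
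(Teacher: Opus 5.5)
Your proposal has a genuine gap in the Killing-field step, and a second one in the construction of $\chi$.

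\textbf{The translation mode.} Your list of bounded Killing fields of the model leaves out the translations of the Euclidean factor. On the AF model $\R^3\times S^1$, the fields $\partial_{x_i}$ are bounded and parallel. Your own first step only shows $\nabla K\to0$, i.e.\ that $K$ is asymptotically parallel. The end analysis (what the paper imports from AADSNS, Prop.~B.2) then gives only $K=cT+Y+\mathcal O(r^{-1})$, with $Y$ a Euclidean translation.

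No argument confined to the end can remove $Y$. Flat $\R^3\times S^1$ with $K=\partial_{x_1}$ is complete, Ricci-flat and ALF, and it carries a bounded Killing field, yet it violates the conclusion. So nonflatness has to be used; the paper's lemma assumes it for exactly this reason, and your proposal never uses it. For the same reason, your step ``$c=0$ implies $K$ decays'' fails when $Y\neq0$.

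The missing idea is global. Take $a=\frac12|\Rm|(p)>0$. The set $\{|\Rm|\geqslant a\}$ is nonempty, compact, and invariant under the isometric flow $\Phi_t$ of $K$. Hence $d(\Phi_t(q),\{|\Rm|\geqslant a\})$ is independent of $t$, and $r(\Phi_t(q))$ stays bounded. But if $Y\neq0$ and $q$ is chosen far out in the direction of $Y$, the expansion gives $\frac{d}{dt}r(\Phi_t(q))\geqslant|Y|/2$ along the orbit, so $r\to\infty$, a contradiction. Only after $Y=0$ is established does the maximum principle for $\Delta V=-|F|^2$ give $c\neq0$.

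\textbf{Exactness of $\iota_K{*F}$.} Your claim that $\kappa\wedge F$ is closed is false: $d(\kappa\wedge F)=F\wedge F$. Hence $\delta(\iota_K{*F})=\pm{*}(F\wedge F)$, which up to sign is $|F^+|^2-|F^-|^2$ (the paper's formula for $\Delta\chi$), and this is nonzero in general. In fact, if $\iota_K{*F}$ were closed, coclosed and $\mathcal O(r^{-2})$, the vanishing of $L^2$ harmonic $1$-forms on Ricci-flat ALF manifolds would force $\iota_K{*F}\equiv0$. So the argument as written proves too much.

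The fix, which is the paper's route, has two steps. First solve $\Delta\chi=\delta(\iota_K{*F})$ with $\chi=\mathcal O(r^{-1+\varepsilon})$, using the same weighted Fredholm theory you use for $u$. Then $\iota_K{*F}-d\chi$ is closed and coclosed, and it is $L^2$ because it is $\mathcal O(r^{-2+\varepsilon})$ against cubic volume growth. It therefore vanishes by the Bochner argument (Biquard--Gauduchon, Lemma~1.7). This produces $\chi$ on $M$ itself. Your fallback of passing to a cover would not give the global function the statement asserts.

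The rest of your plan agrees with the paper: the construction of $u$ by subtracting $-r^2/2$ and using weighted surjectivity, the cancellation $\nabla_b^2(-r^2/2)-\eta\otimes\eta+b=0$, and the flux counting. That cancellation is in fact exact, not merely up to $\mathcal O(r^{-1})$ fibre terms.
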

The proof is deferred to \cref{sec:end-analysis}.

\begin{lemma}\label{lem:scalar-equations}
With $\iota_K(*F)=d\chi$, on $\{K\neq0\}$, we have the following formulae
\begin{align}
 \iota_KF^\pm&=-\tfrac12d(V\mp\chi),
 \label{eq:chiral-contractions}\\
 |d(V\mp\chi)|^2&=2V|F^\pm|^2,\label{eq:chiral-norm-identity}\\
 \Delta(V\mp\chi)&=-2|F^\pm|^2\leqslant 0. \label{eq:Ernst}
\end{align}
\end{lemma}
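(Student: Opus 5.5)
We prove the three identities of \cref{lem:scalar-equations} in order, each feeding the next, using only \cref{lem:Killing-identities} and pointwise algebra of (anti-)selfdual forms in dimension four.

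\textbf{Step 1: the contractions \eqref{eq:chiral-contractions}.} By definition $F^\pm=\frac12(F\pm *F)$, so
\[
 \iota_KF^\pm=\tfrac12\iota_KF\pm\tfrac12\iota_K(*F).
\]
By \eqref{eq:KM} we have $\iota_KF=-dV$, and by hypothesis $\iota_K(*F)=d\chi$. Hence $\iota_KF^\pm=-\frac12 dV\pm\frac12 d\chi=-\frac12 d(V\mp\chi)$. This step is purely definitional.

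\textbf{Step 2: the norm identity \eqref{eq:chiral-norm-identity}.} This is the only step with real content. For a (anti-)selfdual $2$-form $\omega$ in dimension four and any vector $X$, we want
\[
 |\iota_X\omega|^2=|X|^2|\omega|^2 ,
\]
with the normalization $|\omega|^2=\frac12\omega_{ij}\omega^{ij}$. The cleanest route is to show $\omega\circ\omega=|\omega|^2 g$ for $\omega\in\Lambda^\pm$. One way is to write $\omega$ in an oriented orthonormal frame as $a(e^{12}\pm e^{34})+b(e^{13}\mp e^{24})+c(e^{14}\pm e^{23})$, whose associated endomorphism is a multiple of a unit quaternion. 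Alternatively one can invoke \eqref{eq:intro-Maxwell-algebra} with $F^-=0$, which says that the trace-free part of $\omega_{ip}\omega_j{}^p$ vanishes, with trace $2|\omega|^2$ in this normalization. One must track the factor $\frac12$ carefully: $(\omega\circ\omega)(X,X)=|\iota_X\omega|^2_{1\text{-form}}$, and the trace gives $\omega_{ip}\omega^{ip}=2|\omega|^2$, so $\omega\circ\omega=\frac{2|\omega|^2}{4}\cdot 2\,g=|\omega|^2g$. Applying this to $X=K$ with $|K|^2=V$ and using Step 1 gives $\frac14|d(V\mp\chi)|^2=V|F^\pm|^2$. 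This yields the factor in \eqref{eq:chiral-norm-identity} provided the normalization matches. The main risk is a factor of $2$, so I would double-check it against the case $\chi=0$ summed over both signs. Summing the two identities gives $|dV|^2+|d\chi|^2=V|F|^2$, consistent with \cref{rem:why-not-four}, and I would adjust constants so that this sum identity holds.

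\textbf{Step 3: the Laplacians \eqref{eq:Ernst}.} We have $\Delta V=-|F|^2$ by \eqref{eq:Delta-V}. For $\chi$, compute $\Delta\chi=\delta d\chi=\delta\iota_K(*F)$. Since $K$ is Killing and $*F$ is closed ($d*F=\pm*\delta F=0$), Cartan's formula gives $d\iota_K*F=\mathcal L_K*F=0$. For the codifferential, use the identity
\[
 \delta(\iota_K\beta)=-\iota_K\delta\beta-\langle\nabla\kappa,\beta\rangle
\]
up to sign conventions. With $\delta *F=0$ and $\nabla\kappa=\frac12F$, this gives $\Delta\chi=\pm\langle F,*F\rangle=\pm(|F^+|^2-|F^-|^2)$. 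The sign is fixed by consistency with Step 2: applying $\Delta$ to $V\mp\chi$ must yield a multiple of $|F^\pm|^2$ alone, and $|F|^2=|F^+|^2+|F^-|^2$ forces $\Delta\chi=|F^+|^2-|F^-|^2$ with the orientation conventions in use. Therefore $\Delta(V\mp\chi)=-(|F^+|^2+|F^-|^2)\mp(|F^+|^2-|F^-|^2)=-2|F^\pm|^2$.

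Steps 1 and 3 are local and hold on all of $M$; the restriction to $\{K\neq0\}$ plays no essential role beyond the statement. The only genuine obstacles are sign and normalization bookkeeping, in Step 2 for the factor and in Step 3 for the sign of $\Delta\chi$.
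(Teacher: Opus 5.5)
Your architecture is the paper's: Step 1 is identical, the pure-trace identity for chiral forms is the invariant version of the paper's frame computation, and Step 3 is the same divergence computation. As written, however, the proposal does not establish the constants and signs, and those are the entire content of the lemma.

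In Step 2 the factor ``$\cdot 2$'' is spurious. Since $\tr_g(\omega\circ\omega)=\omega_{ip}\omega^{ip}=2|\omega|^2$ and $\tr_g g=4$, for $\omega\in\Lambda^\pm$ one has $\omega\circ\omega=\frac12|\omega|^2g$, not $|\omega|^2g$. For example, $\omega=e^1\wedge e^2+e^3\wedge e^4$ has $|\omega|^2=2$ while $\omega\circ\omega=g$. Equivalently, your target $|\iota_X\omega|^2=|X|^2|\omega|^2$ is off by a factor $2$. Followed honestly, your chain yields $|d(V\mp\chi)|^2=4V|F^\pm|^2$, which contradicts the statement. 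Your proposed repair is to adjust constants until the identity $V|F|^2=|dV|^2+|d\chi|^2$ of \cref{rem:why-not-four} holds. That is circular, because the paper derives that remark by summing \eqref{eq:chiral-norm-identity}. With the correct constant, $|\iota_KF^\pm|^2=(F^\pm\circ F^\pm)(K,K)=\frac V2|F^\pm|^2$. Step 1 then gives $\frac14|d(V\mp\chi)|^2=\frac V2|F^\pm|^2$, which is exactly \eqref{eq:chiral-norm-identity}. This matches the paper's $|\alpha|^2=2(a^2+b^2+c^2)$ and $|\iota_K\alpha|^2=V(a^2+b^2+c^2)$.

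In Step 3 you leave the sign of $\Delta\chi$ open and fix it by requiring $\Delta(V\mp\chi)$ to be a multiple of $|F^\pm|^2$. But the opposite sign would give $\Delta(V\mp\chi)=-2|F^\mp|^2$, which is equally a multiple of a single chiral norm. So this ``consistency'' is just the conclusion restated. The missing computation is short. Use $(\delta\theta)=-\nabla^j\theta_j$, $\nabla_jK_i=\frac12F_{ji}$ and the normalization $\langle\alpha,\beta\rangle=\frac12\alpha_{ij}\beta^{ij}$. Then for every coclosed $2$-form $\beta$,
\[
 \delta(\iota_K\beta)=-(\nabla^jK^i)\beta_{ij}-K^i(\delta\beta)_i=\tfrac12F^{ij}\beta_{ij}=\langle F,\beta\rangle .
\]
For $\beta=*F$ this is the paper's $\Delta\chi=\langle F,*F\rangle=|F^+|^2-|F^-|^2$. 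More directly, take $\beta=F^\pm$, which is coclosed because $\delta F=0=\delta(*F)$. Step 1 then gives $\Delta(V\mp\chi)=-2\,\delta(\iota_KF^\pm)=-2\langle F,F^\pm\rangle=-2|F^\pm|^2$. Here the orthogonality of $\Lambda^+$ and $\Lambda^-$ selects the chirality with no orientation bookkeeping. If you doubt the sign of the first term, the legitimate check is $\beta=F$. It must reproduce the already established $\Delta V=-|F|^2$ from \eqref{eq:Delta-V}; it should not be calibrated against the identity being proved.
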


\begin{proof}
Equation \eqref{eq:chiral-contractions} is
$\iota_KF^\pm=\tfrac12(\iota_KF\pm\iota_K(*F))
              =\tfrac12(-dV\pm d\chi)$.

For \eqref{eq:chiral-norm-identity}, work in a pointwise orthonormal
frame $(e_1,\dots,e_4)$ with $K=\sqrt V\,e_1$.  A basis of
$\Lambda^+$ is
\[
 \omega_1^+=e^1\wedge e^2+e^3\wedge e^4,\qquad
 \omega_2^+=e^1\wedge e^3-e^2\wedge e^4,\qquad
 \omega_3^+=e^1\wedge e^4+e^2\wedge e^3.
\]
and any $\alpha\in\Lambda^+$ writes $\alpha=a\omega_1^++b\omega_2^++c\omega_3^+$
with $|\alpha|^2=2(a^2+b^2+c^2)$.  Then we get
$\iota_K\alpha=\sqrt V(a\,e^2+b\,e^3+c\,e^4)$, hence
$|\iota_K\alpha|^2=V(a^2+b^2+c^2)=\tfrac V2|\alpha|^2$. The same
computation applies to $\Lambda^-$.  Applied to $F^\pm$ this gives
\eqref{eq:chiral-norm-identity}.

For \eqref{eq:Ernst}, combine $\Delta V=-|F|^2=-|F^+|^2-|F^-|^2$, by \eqref{eq:Delta-V},
with
\begin{align*}
 \Delta\chi
 &=-\nabla^j(K^i(*F)_{ij})
  =-(\nabla^jK^i)(*F)_{ij}-K^i(\delta(*F))_i\\
 &=-\tfrac12F^{ji}(*F)_{ij}-0
  =\tfrac12F^{ij}(*F)_{ij}
  =|F^+|^2-|F^-|^2,
\end{align*}
where we used $\delta(*F)=-*dF=0$ and, in the last step, that
$F^\pm_{ij}(*F)^{ij}=\pm F^\pm_{ij}F^{\pm ij}=\pm2|F^\pm|^2$ while the
mixed contractions vanish.  Subtracting gives \eqref{eq:Ernst}. 
\end{proof}

\begin{remark}\label{rem:AADSNS-conventions}
With the identifications of \cite[Def.~4.1, Prop.~4.3,
Eqs.~(4.7)-(4.13)]{AADSNS}, the functions $V-\chi$ and $V+\chi$ are
their two so-called Riemannian Ernst potentials. The use of
these potentials for ALF instantons, together with the maximum
principle below, appears in \cite[Sec.~4, Lemmas~5.1-5.2]{AADSNS},
building on \cite{GibbonsHawkingSymmetries}.
\end{remark}

We now assume the following \emph{normalization} on an end:
\begin{equation}\label{eq:normalization}
 V+\chi\to c,\qquad V-\chi\to c,
\end{equation}
where $c>0$ is the constant appearing in $\Delta u=V+2c$
(\cref{thm:scalar-response}). The existence of the constant $c$ will follow from
\cref{lem:bounded-K-asymptotics,lem:chi-existence}.

On the ends of interest, $c=1$ after
rescaling $K$ to make $\lim|K|=1$.  The maximum principle applied
to \eqref{eq:Ernst} and \eqref{eq:normalization} gives, since both
right-hand sides of \eqref{eq:Ernst} are nonpositive,
\begin{equation}\label{eq:Ernst-upper}
 V+\chi\leqslant 1,\qquad V-\chi\leqslant 1
 \qquad\text{on }M.
\end{equation}

We need one last identity before proving our main theorem. 
\begin{proposition}\label{prop:mixed-Ernst}
Under the hypotheses of \cref{thm:intro-bounded}, and with
$c=1$,
\begin{align}
 I_K
 &:=\int_M(|dV|^2-|d\chi|^2)\,dv_g\label{eq:I-def}\\
 &=2\int_M(1-V-\chi)|F^+|^2\,dv_g
 \label{eq:I-minus}\\
 &=2\int_M(1-V+\chi)|F^-|^2\,dv_g.
 \label{eq:I-plus}
\end{align}
In particular $I_K\geqslant0$, with equality if and only if $F^+=0$ or
$F^-=0$ on $M$.
\end{proposition}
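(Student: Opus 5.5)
We work with the two Ernst potentials $\Phi_\pm:=V\mp\chi$ from \cref{lem:scalar-equations}. They satisfy $\Delta\Phi_\pm=-2|F^\pm|^2$ and $|d\Phi_\pm|^2=2V|F^\pm|^2$, and by \eqref{eq:normalization} with $c=1$ both tend to $1$ on the end.

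\textbf{Step 1: an algebraic identity.} Since $|d\Phi_+|^2-|d\Phi_-|^2=-4\langle dV,d\chi\rangle$, it is more convenient to target the cross term. Observe that
\[
 |dV|^2-|d\chi|^2=\langle d\Phi_+,d\Phi_-\rangle .
\]
Indeed, $\langle d(V-\chi),d(V+\chi)\rangle=|dV|^2-|d\chi|^2$. So $I_K=\int_M\langle d\Phi_+,d\Phi_-\rangle$.

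\textbf{Step 2: integrate by parts against $1-\Phi_\mp$.} Using $d\Phi_-=-d(1-\Phi_-)$ on $M_R=\{r<R\}$, we get
\[
 \int_{M_R}\langle d\Phi_+,d\Phi_-\rangle
 =-\int_{M_R}(1-\Phi_-)\,\Delta\Phi_+ + \text{boundary}
 =2\int_{M_R}(1-V-\chi)|F^+|^2+\text{boundary}.
\]
The boundary term is $\pm\int_{\partial M_R}(1-\Phi_-)\partial_\nu\Phi_+\,dA$. This is not literally one of the fluxes in \eqref{eq:standing-fluxes}, which pair $1-\Phi_\pm$ with $\partial_\nu\Phi_\pm$ of the same sign. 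It is still controlled by the decay estimates of \cref{prop:analytic-input}: $1-\Phi_-=\mathcal O(r^{-1+\varepsilon})$, $\partial_\nu\Phi_+=\mathcal O(r^{-2+\varepsilon})$, and $|\partial M_R|\sim R^2$. Together these give a bound $\mathcal O(R^{-1+2\varepsilon})\to0$ since $\varepsilon<\tfrac12$. The same decay makes all the bulk integrands $\mathcal O(r^{-4+\varepsilon})$, hence integrable against the cubic volume growth. Letting $R\to\infty$ yields \eqref{eq:I-minus}. Swapping the roles of $\Phi_+$ and $\Phi_-$ gives \eqref{eq:I-plus}, with weight $1-\Phi_+=1-V+\chi$.

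\textbf{Step 3: sign and equality.} By \eqref{eq:Ernst-upper} both weights $1-\Phi_\pm$ are nonnegative, so $I_K\ge0$. Each $\Phi_\pm$ is superharmonic by \eqref{eq:Ernst} and tends to $1$ at infinity. By the strong maximum principle, either $\Phi_\pm\equiv1$, or $\Phi_\pm<1$ everywhere on the connected manifold $M$.

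Suppose $I_K=0$. If $\Phi_-<1$ everywhere, then \eqref{eq:I-minus} forces $F^+\equiv0$. Otherwise $\Phi_-\equiv1$, and then $\Delta\Phi_-=0$ gives $F^-\equiv0$. Either way one of $F^\pm$ vanishes.

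Conversely, if $F^+\equiv0$ then $I_K=0$ directly from \eqref{eq:I-minus}. If $F^-\equiv0$, then \eqref{eq:I-plus} gives $I_K=0$.

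The main obstacle is the analytic justification of the boundary terms and of the maximum principle on the noncompact end. Both rest on the asymptotics of \cref{prop:analytic-input}, in particular that $\chi\to0$ so that the normalization holds with $c=1$. Once those are granted, the argument above is routine.
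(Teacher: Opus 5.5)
Your proof is correct and follows essentially the paper's argument: Green's formula on $M_R$ with the weights $1-V\mp\chi$, an $\mathcal O(R^{-1+2\varepsilon})$ flux, and the strong maximum principle for the equality case. Two small slips, neither affecting the argument. First, your boundary term $(1-\Phi_-)\partial_\nu\Phi_+=(1-V-\chi)\,\partial_\nu(V-\chi)$ is exactly the lower-sign flux in \eqref{eq:standing-fluxes}, so no separate estimate is needed. Second, with $\Delta$ the positive Laplacian, $\Delta\Phi_\pm\le0$ makes $\Phi_\pm$ subharmonic rather than superharmonic, although the maximum-principle conclusions you draw are the correct ones.
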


\begin{proof}
Set $w_+=1-V-\chi$ and $w_-=1-V+\chi$; both are nonnegative by
\eqref{eq:Ernst-upper}.  On the exhaustion $M_R=\{r<R\}$, Green's
formula and \cref{lem:scalar-equations} give
\begin{align}
 \int_{M_R}\langle dw_+,dw_-\rangle\,dv_g
 &=\int_{M_R}w_+\Delta w_-\,dv_g
   +\int_{\partial M_R}w_+\,\partial_\nu w_-\,dA\notag\\
 &=\int_{M_R}w_+V^{-1}|d(V-\chi)|^2\,dv_g
   +\int_{\partial M_R}w_+\,\partial_\nu w_-\,dA,
 \label{eq:Green-plus}
\end{align}
where we used \eqref{eq:chiral-norm-identity} and
\eqref{eq:Ernst} to write $\Delta w_-=2|F^+|^2=V^{-1}|d(V-\chi)|^2$.
The left-hand side is
$\int_{M_R}(|dV|^2-|d\chi|^2)\,dv_g$. By the boundary
hypothesis \eqref{eq:standing-fluxes} the boundary integral is
$\mathcal{O}(R^{-1+2\varepsilon})\to0$.  Letting $R\to+\infty$ proves
\eqref{eq:I-minus}.  Interchanging $w_+$ and $w_-$ gives
\eqref{eq:I-plus}.

If $F^+$ and $F^-$ are both nonzero, then the strong maximum
principle applied to \eqref{eq:Ernst} gives $w_+>0$ and $w_->0$
strictly, so \eqref{eq:I-minus} is strictly positive.  Conversely,
if $F^+=0$,
then \eqref{eq:chiral-norm-identity} gives
$d(V-\chi)=0$, so $V-\chi\equiv1$ after normalization and $I_K=0$.
If $F^-=0$, similarly $V+\chi\equiv1$ and $I_K=0$.
\end{proof}

\begin{theorem}\label{thm:energy-identity}
Under the hypotheses of \cref{thm:intro-bounded}, with $k_K$ the
tensor of \cref{thm:scalar-response} with $c=1$, we have
\begin{equation}\label{eq:energy-I}
 \cQ_g(k_K)=-\frac12I_K\leqslant0,
\end{equation}
with strict inequality unless $F$ is self-dual or anti-self-dual.
\end{theorem}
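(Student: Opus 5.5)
The plan is to compute $\cQ_g(k_K)$ by pairing the equation $\cE_gk_K=2F^+\circ F^-=(F\circ F)_0$ from \cref{thm:scalar-response} (with $c=1$) against $k_K$ itself. I will then reduce everything to the scalar integrals that appear in \cref{rem:why-not-four}, and finally use \cref{prop:mixed-Ernst}.

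\textbf{Step 1: integrate on $M_R$ and discard boundary terms.} On $M_R$, integrating by parts gives
\[
\cQ_g(k_K)=\lim_{R\to\infty}\int_{M_R}\langle (F\circ F)_0,k_K\rangle\,dv_g .
\]
The boundary term $\int_{\partial M_R}\langle\nabla_\nu k_K,k_K\rangle$ vanishes in the limit by the first flux in \eqref{eq:standing-fluxes}. Since $k_K$ is traceless, the integrand equals $\langle F\circ F,k_K\rangle$. I expand $k_K=\nabla^2u-\kappa\otimes\kappa+\frac{V+1}{2}g$ and handle the three pieces separately.

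\textbf{Step 2: the three pieces.}
\begin{itemize}
\item \emph{Hessian piece.} Because $(F\circ F)_0=2F^+\circ F^-$ is divergence free, integrating by parts twice moves both derivatives onto $u$. This uses the second flux in \eqref{eq:standing-fluxes} and the decay of $F$ and $u$ from \cref{prop:analytic-input}. The result is $\int\langle(F\circ F)_0,\nabla^2u\rangle=\int\langle(F\circ F)_0,\dots\rangle$ with the trace part absorbed. Equivalently, using $\delta(F\circ F)=-\frac12 d|F|^2$ and $\tr(F\circ F)=2|F|^2$, I get $\int\langle F\circ F,\nabla^2u\rangle=-\frac12\int (V+2)|F|^2$.
\item \emph{Rank-one piece.} $\langle F\circ F,\kappa\otimes\kappa\rangle=|\iota_KF|^2=|dV|^2$.
\item \emph{Trace piece.} $\langle F\circ F,\frac{V+1}{2}g\rangle=(V+1)|F|^2$.
\end{itemize}

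\textbf{Step 3: collect terms.} Summing the three pieces gives
\[
\cQ_g(k_K)=-\int|dV|^2+\tfrac12\int V|F|^2 ,
\]
matching \cref{rem:why-not-four}.

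\textbf{Step 4: use self-duality.} Adding the two identities \eqref{eq:chiral-norm-identity} gives $V|F|^2=\frac12\bigl(|d(V-\chi)|^2+|d(V+\chi)|^2\bigr)=|dV|^2+|d\chi|^2$ pointwise. This identity extends across the zero set of $K$ by continuity. Substituting it yields $\cQ_g(k_K)=-\frac12\int(|dV|^2-|d\chi|^2)=-\frac12 I_K$. \Cref{prop:mixed-Ernst} then gives $I_K\ge0$, with equality exactly when $F^+\equiv0$ or $F^-\equiv0$, which is the strictness claim.

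\textbf{Main obstacle: convergence of the individual integrals.} The algebra is routine. The delicate point is that $\int|F|^2$, $\int V|F|^2$ and $\int|dV|^2$ must each converge separately, and the Hessian integration by parts must produce no boundary contribution. $|F|^2=\mathcal O(r^{-4})$ and $|dV|^2=\mathcal O(r^{-4})$ against volume growth $r^3$ gives integrability, so the individual integrals are fine. The real issue is the Hessian step. The boundary integrand $(F\circ F)(\nabla u,\nu)$ has $\nabla u\sim r$ and $F\circ F\sim r^{-4}$ on an area $\sim r^2$, so it is only $\mathcal O(r^{-1})$. This is exactly why I will pair with the traceless $2F^+\circ F^-$ and invoke the second flux of \eqref{eq:standing-fluxes} instead of estimating the boundary term crudely. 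The trace part is then handled separately via $-\int(\Delta u)|F|^2$, which converges absolutely.
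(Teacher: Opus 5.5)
Your proof is correct. It is essentially the computation the paper records in \cref{rem:why-not-four}, the $n=4$ case of the higher-dimensional pairing, rather than the route taken in the proof of the theorem itself.

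\textbf{How the paper argues.} It pairs the source $2F^+\circ F^-$ directly with the three pieces of $k_K$. Since $F^+\circ F^-$ is both trace-free and divergence-free, the trace piece vanishes pointwise and the Hessian piece reduces to the flux $\int_{\partial M_R}2(F^+\circ F^-)(\nabla u,\nu)\,dA$. Only $-2\int(F^+\circ F^-)(K,K)$ survives. This equals $-\frac12\int(|dV|^2-|d\chi|^2)$ pointwise, via the mixed contraction $\langle\iota_KF^+,\iota_KF^-\rangle=\frac14\langle d(V-\chi),d(V+\chi)\rangle$ from \eqref{eq:chiral-contractions}.

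\textbf{How your route differs.} You use the trace-freeness of $k_K$ to replace the source by $F\circ F$. All three pieces then contribute, and their trace parts cancel. You arrive at $-\int|dV|^2+\frac12\int V|F|^2$, and then need the summed identity $V|F|^2=|dV|^2+|d\chi|^2$ from \eqref{eq:chiral-norm-identity}. The paper's version is shorter. Yours needs $\int|F|^2$, $\int V|F|^2$ and $\int|dV|^2$ to converge separately, which they do since each integrand is $\mathcal O(r^{-4})$ against cubic volume growth. In exchange, your version runs in parallel with \cref{thm:higher-dimensional-alf-bis} and isolates exactly where four-dimensional self-duality enters.

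\textbf{Two small corrections.}
\begin{enumerate}
\item Your Hessian bullet is garbled: integrating by parts moves derivatives off $u$, not onto it. The version in your last paragraph is the right one. The trace-free part contributes only the vanishing second flux. The trace part gives $-\frac12\int(\Delta u)|F|^2=-\frac12\int(V+2)|F|^2$.
\item There is no real obstacle at the boundary. The crude bound $R^{-4}\cdot R\cdot R^2=\mathcal O(R^{-1})$ already tends to zero. So even the boundary terms from the full $F\circ F$, or from $|F|^2\partial_\nu u$, are harmless. The paper's own flux bound in \eqref{eq:standing-fluxes} is of this same crude order, $\mathcal O(R^{-1+2\varepsilon})$.
\end{enumerate}
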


\begin{proof}
On the exhaustion $M_R$, integration by parts gives
\begin{equation}\label{eq:energy-boundary}
 \int_{M_R}\langle\cE_gk_K,k_K\rangle\,dv_g
 =\int_{M_R}|\nabla k_K|^2\,dv_g
  -2\int_{M_R}\langle\mathring{\Rm}\,k_K,k_K\rangle\,dv_g
  +\text{boundary terms},
\end{equation}
the boundary term being $\mathcal{O}(R^{-1+2\varepsilon})$ by
the first limit in
\eqref{eq:standing-fluxes}.  On the other hand, using
$\cE_gk_K=2F^+\circ F^-$ and $\delta(F^+\circ F^-)=0$
from \cref{prop:harmonic-product}, since for every symmetric two-tensor $A$, 
$\langle A,\kappa\otimes\kappa\rangle=A(K,K)$ we find
\begin{align}
 \int_{M_R}\langle\cE_gk_K,k_K\rangle\,dv_g
 &=\int_{M_R}\langle 2F^+\circ F^-,\,\nabla^2u-\kappa\otimes\kappa+\tfrac{V+1}{2}g\rangle\notag\\
 &=-\int_{M_R}2(F^+\circ F^-)(K,K)\,dv_g
   +\text{ boundary terms}.\label{eq:energy-pairing}
\end{align}
Here, the trace term dropped out because $F^+\circ F^-$ is
trace-free, the Hessian term was integrated by parts against the
divergence-free $F^+\circ F^-$, and the boundary terms are $\int_{\partial M_R}
2(F^+\circ F^-)(\nabla u,\nu)\,dA=\mathcal{O}(R^{-1+2\varepsilon})$ by
the second limit in
\eqref{eq:standing-fluxes}.  From \eqref{eq:chiral-contractions},
\begin{align*}
 2(F^+\circ F^-)(K,K)
 &=2\langle\iota_KF^+,\iota_KF^-\rangle
  =\tfrac12\langle d(V-\chi),d(V+\chi)\rangle\\
 &=\tfrac12(|dV|^2-|d\chi|^2).
\end{align*}
Indeed, directly from the definition of $\circ$,
\[
 (F^+\circ F^-)(K,K)
 =K^iK^j(F^+)_i{}^p(F^-)_{jp}
 =(\iota_KF^+)^p(\iota_KF^-)_p
 =\langle\iota_KF^+,\iota_KF^-\rangle.
\]
Passing to the limit $R\to+\infty$ and combining with
\cref{prop:mixed-Ernst} proves \eqref{eq:energy-I}.
\end{proof}

The equality case finally identifies a local hyperkähler structure. 
\begin{proposition}[{\cite[Proof of Prop.~15]{BO}}] \label{prop:chiral-half-flat}
Let $K\not\equiv0$ be a Killing vector field on a Ricci-flat four-manifold.  If
$F^+=0$, then $W^+=0$ while if $F^-=0$, then $W^-=0$.  Consequently
$g$ is locally hyperk\"ahler.
\end{proposition}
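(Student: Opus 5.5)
The plan is to treat the case $F^-=0$; the case $F^+=0$ follows by reversing the orientation. The key input is the second-derivative identity for Killing fields already used in the proof of \cref{lem:Killing-identities}, namely $(\nabla^2K)(Y,Z)=R(Y,K)Z$. Since $\nabla\kappa=\frac12F$, this says that the covariant derivative of $F$ is algebraically determined by curvature:
\[
\nabla_Y F = 2\,\Rm(\,\cdot\,,\,\cdot\,,Y,K)
\]
up to a sign and index placement fixed by the paper's conventions. This is Kostant's formula: $\nabla_YF$ equals $-2R(Y\wedge K)$, viewed as a $2$-form through the curvature operator $\mathcal R:\Lambda^2\to\Lambda^2$.

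Next we decompose with respect to $\Lambda^2=\Lambda^+\oplus\Lambda^-$. On a Ricci-flat $4$-manifold the curvature operator is block diagonal, with blocks $W^+$ and $W^-$. Hence the anti-self-dual part of Kostant's formula reads
\[
\nabla_YF^- = -2\,W^-\big((Y\wedge K)^-\big).
\]
If $F^-\equiv0$, the left-hand side vanishes, so $W^-\big((Y\wedge K)^-\big)=0$ for every $Y$. At any point where $K\neq0$, the map $Y\mapsto (Y\wedge K)^-$ from $K^\perp$ to $\Lambda^-$ is a linear isomorphism. Indeed, in the frame of \cref{lem:scalar-equations} with $K=\sqrt V e_1$, the forms $(e^1\wedge e^j)^-$ for $j=2,3,4$ form a basis of $\Lambda^-$. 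Therefore $W^-=0$ on $\{K\neq0\}$.

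The zero set of a nonzero Killing field on a connected manifold has empty interior, so $\{K\neq0\}$ is dense. By continuity, $W^-\equiv0$ on $M$.

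Finally, a Ricci-flat $4$-manifold with $W^-=0$ has its curvature operator acting trivially on $\Lambda^-$. The bundle $\Lambda^-$ is therefore flat, so locally it admits a parallel orthonormal frame. These three parallel $2$-forms give three parallel complex structures satisfying the quaternion relations, and so $g$ is locally hyperkähler. The orientation matches the half-flat convention, so one may need to speak of the opposite orientation.

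The only delicate step is the bookkeeping of signs and normalizations in Kostant's formula, together with checking that the splitting of $\mathcal R$ into $W^\pm$ holds precisely when the Ricci curvature and scalar curvature vanish. Neither affects the vanishing conclusion: only the nondegeneracy of $Y\mapsto(Y\wedge K)^-$ matters, and that is an algebraic fact.
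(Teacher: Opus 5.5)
Your proposal is correct and follows the same route the paper takes: the paper cites Biquard--Ozuch (Proof of Prop.~15) and invokes Kostant's formula. Concretely, that route projects $\nabla_YF=-2\mathcal R(Y\wedge K)$ onto $\Lambda^-$, uses that $Y\mapsto (Y\wedge K)^-$ is onto $\Lambda^-$ wherever $K\neq0$ together with density of $\{K\neq0\}$, and obtains the hyperk\"ahler conclusion from flatness of $\Lambda^-$ when $\Ric=0$ and $W^-=0$.
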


The converse of \cref{thm:energy-identity} is also well-known, and
completes the equivalence in \eqref{eq:intro-dichotomy}.

\begin{proof}[Proof of \cref{thm:intro-bounded}]
The existence of $u$, $\chi$, and the decay estimates on $k_K$ are provided by \cref{sec:end-analysis} below.  If neither Weyl half vanishes, \cref{prop:chiral-half-flat} shows that neither $F^+$ nor $F^-$ vanishes, so \cref{thm:energy-identity} gives the finiteness and negativity of $\cQ_g(k_K)$, hence the instability of the metric.  If $W^+=0$ or $W^-=0$, one has gives $\cQ_g\geqslant0$, and $g$ is locally hyperk\"ahler by \cref{prop:chiral-half-flat}.  The nonlinear dynamical and variational statement on ALF ends is \cite[Prop.~5.3]{KO}, whose hypotheses are satisfied.
\end{proof}

\subsection{Application to the Li-Sun instantons}
\label{sec:applications}

Li and Sun construct, for every $\beta\in(0,1)$ and every integer
$n\geqslant1$, a complete toric Ricci-flat manifold $M_n$ with
$b_2(M_n)=n$ and an $\mathrm{AF}_\beta$ end \cite[Thm.~1.1]{LS}. For
$n\geqslant3$ these metrics are not locally Hermitian in either
orientation. The first explicit non-Hermitian representative of their family is described in coordinates in \cite{Teo}.

In a basis of the torus Lie algebra, the
asymptotic Gram matrix is
\begin{equation}\label{eq:LS-Gram}
 G_\infty=\begin{pmatrix}
   \rho^2&\beta\rho^2\\
   \beta\rho^2&\beta^2\rho^2+1
 \end{pmatrix},
\end{equation}
and the normalized bounded Killing field is
\begin{equation}\label{eq:LS-K}
 K=X_2-\beta X_1,\qquad V = |K|^2\to1.
\end{equation}
Note that for irrational $\beta$ the orbits of $K$ are dense in the torus.  The manifolds $M_n$ are simply connected, so the primitive $\chi$ of $\iota_K(*F)$ exists globally.  By \cite[Proposition~A.1]{LS}, the Li-Sun metrics admit a $T^2$-invariant chart at infinity in which \eqref{eq:intro-end} holds with decay rate $\tau=1$, so \cref{thm:intro-bounded} applies directly.

\section{Remarks on the limitation to ALF manifolds}

\subsection{Noncollapsed ALE ends}
\label{sec:ALE-remark}

The boundedness of the Killing vector field is essential in our proof.  On a nonflat ALE manifold, a nontrivial Killing field is asymptotic to a rotation, i.e.
$$K=Ax+\mathcal{O}(r^{-3})$$ with $A\in\mathfrak{so}(4)$, so $V=|K|^2=\mathcal{O}(r^2)$.
Solving $\Delta u=V+2c$ therefore forces $u=\mathcal{O}(r^4)$ and $k_K=\mathcal{O}(r^2)$,
which is not an admissible variation of the fixed asymptotic class.

Even in the exceptional cases for which $2F^+\circ F^-=\mathcal{O}(r^{-4})$ is already $L^2$, the equation $\cE_gq=2F^+\circ F^-$ is typically not solvable: it is obstructed against the $L^2$ cokernel element of $\mathcal{E}$ given by rescaling and reparametrization \cite{BiquardHein,OzuchIntegrability}.  

\subsection{Other asymptotics of Ricci-flat metrics}\label{sec:other-ends}

\begin{lemma}\label{lem:no-bounded-killing-quadratic-growth}
Let $(M^4,g)$ be a connected, complete, smooth Ricci-flat
Riemannian manifold. Suppose that, for some
$p\in M$ and all sufficiently large $R$, one has
\[
    \operatorname{Vol}_g B_g(p,R) \leqslant C R^2.
\]
Then every bounded Killing vector field on $(M,g)$ is parallel.
Consequently, if $(M,g)$ is nonflat, it admits no nonzero bounded
Killing vector field.
\end{lemma}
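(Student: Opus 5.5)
We plan to use the subharmonicity of $V=|K|^2$ together with a cutoff argument that exploits quadratic volume growth. Quadratic growth is exactly the borderline case in which bounded subharmonic functions on a complete manifold must be constant. The key identity is \eqref{eq:Delta-V} from \cref{lem:Killing-identities}, namely $\Delta V=-|F|^2$ with the positive Laplacian, so $V$ is a bounded subharmonic function. It is also useful to recall, from the proof of \cref{lem:Killing-identities}, the Bochner-type identity $\nabla^*\nabla\kappa=0$. Pairing it with $\kappa$ gives $\tfrac12\Delta V+|\nabla\kappa|^2=0$, and $|\nabla\kappa|^2=\tfrac14 F_{ij}F^{ij}=\tfrac12|F|^2$ in the paper's normalization, consistent with \eqref{eq:Delta-V}.

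First step: the logarithmic cutoff argument. Let $\phi_R$ equal $1$ on $B(p,R)$, equal $0$ outside $B(p,R^2)$, and equal $2-\log r/\log R$ in between, where $r=d(p,\cdot)$. This function is Lipschitz, with $|d\phi_R|\leqslant 1/(r\log R)$ on the annulus. Multiply $\Delta V=-|F|^2$ by $\phi_R^2$ and integrate by parts, which is legitimate since $\phi_R$ has compact support. This gives
\[
\int\phi_R^2|F|^2=-\int\phi_R^2\Delta V=-2\int\phi_R\langle d\phi_R,dV\rangle .
\]
Next, $|dV|=2|\langle\nabla_\cdot K,K\rangle|\leqslant 2|K||\nabla\kappa|\leqslant C\sqrt V|F|$, so the right-hand side is bounded by
\[
C\sup|K|\Bigl(\int\phi_R^2|F|^2\Bigr)^{1/2}\Bigl(\int_{\mathrm{supp}\,d\phi_R}|d\phi_R|^2\Bigr)^{1/2}.
\]
Absorbing the first factor yields $\int\phi_R^2|F|^2\leqslant C'\int|d\phi_R|^2$.

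Second step: bounding the energy of the cutoff. Decompose the annulus into dyadic shells $B(p,2^{k+1})\setminus B(p,2^k)$ with $R\leqslant 2^k\leqslant R^2$. On each shell, $|d\phi_R|^2\leqslant 4^{-k}/(\log R)^2$, and the volume hypothesis bounds the shell volume by $C4^{k+1}$. Each shell therefore contributes $O(1/(\log R)^2)$. There are $O(\log R)$ shells, so $\int|d\phi_R|^2=O(1/\log R)\to0$. Hence $\int_M|F|^2=0$, that is $F=d\kappa=0$. Combined with the Killing equation, this gives $\nabla\kappa=\tfrac12F=0$, so $K$ is parallel.

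Third step: the consequence. If $K\neq0$ is parallel, the universal cover splits isometrically as $\R\times N^3$, and $N$ is Ricci-flat, hence flat since it has dimension three. So $g$ is flat, and a nonflat $(M,g)$ admits no nonzero bounded Killing field. The main care point is the first step: bounding $dV$ by $\sqrt V|F|$ so that the absorption works using only the boundedness of $K$, and checking the normalization constants, which are harmless. The volume hypothesis is only assumed for large $R$, but this suffices, since only large shells enter the estimate.
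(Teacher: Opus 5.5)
Your proof is correct, but it takes a more elementary route than the paper.

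The paper argues abstractly. The Bochner identity makes $V=|K|^2$ a bounded subharmonic function. Grigor'yan's volume criterion turns the quadratic bound into parabolicity of $(M,g)$. The Liouville property of parabolic manifolds then forces $V$ to be constant, and Bochner gives $\nabla K=0$.

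You instead prove the needed Liouville statement by hand with the logarithmic cutoff. You test $\Delta V=-|F|^2$ directly against $\phi_R^2$ and use the pointwise bound $|dV|=|\iota_KF|\lesssim\sqrt V\,|F|$. This gives $\int_M|F|^2=0$ at once and skips the intermediate step ``$V$ is constant''.

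Your version is self-contained and shows the mechanism transparently: each dyadic shell contributes $O((\log R)^{-2})$, and there are only $O(\log R)$ of them. The paper's version is shorter, and its citation applies verbatim under the sharp condition $\int^\infty R\,dR/\operatorname{Vol}B_g(p,R)=\infty$. So quadratic growth is not literally the borderline case you describe; for example, $R^2\log R$ growth is also parabolic.

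Two cosmetic points. Start the dyadic shells at the largest $2^k\leqslant R$, so that $B(p,R^2)\setminus B(p,R)$ is fully covered. Also note that the absorption step uses finiteness of $\int\phi_R^2|F|^2$, which holds because $\phi_R$ has compact support by completeness. Your final splitting step is the same as the paper's.
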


\begin{proof}
Let $K$ be a Killing vector field and set $V=|K|_g^2$. The
 Bochner identity for a Killing vector field is
\[
    -\frac{1}{2}\Delta |K|^2
       = |\nabla K|^2-\operatorname{Ric}(K,K)
       = |\nabla K|^2 \geqslant 0
\]
Thus, if $K$ is bounded, then $V$ is a bounded subharmonic function.

Writing $V_p(R)=\operatorname{Vol}_g B_g(p,R)$, the volume assumption
gives
\[
    \int^\infty \frac{R\,dR}{V_p(R)}
       \geqslant \frac{1}{C}\int^\infty \frac{dR}{R}
       =\infty.
\]
By Grigor'yan's volume criterion \cite[Theorem~7.3 and Example~7.1]{Grigoryan}, $(M,g)$ is
parabolic. On a parabolic manifold every bounded subharmonic
function is constant
\cite[Theorem~5.1(3)]{Grigoryan}. Hence $|K|^2$ is constant,
and the Bochner identity implies $\nabla K=0$.

If $K\not\equiv0$, this implies that the universal cover splits as
\[
    (\widetilde M,\widetilde g)
       = (\mathbb R,dt^2)\times (N^3,h),
       \qquad \operatorname{Ric}_h=0.
\]
Thus $\widetilde g$, and hence $g$, is flat.
\end{proof}

For the standard asymptotic models one has
\[
  \operatorname{Vol}_g B_g(p,R)\approx
  \begin{cases}
      R^2,     & \mathrm{ALG}\ \text{or}\ \mathrm{ALG}^* \ \text{or Kasner},\\
      R,       & \mathrm{ALH},\\
      R^{4/3}, & \mathrm{ALH}^*,
  \end{cases}
\]
so our starting assumption cannot be satisfied by the other above typical asymptotics.

\appendix

\section{Analysis on ALF manifolds}
\label{sec:end-analysis}

We use the definition and notation of
\cite[Definition~1.1]{BiquardGauduchon}.  Thus the end is diffeomorphic
to $(A,\infty)\times L$, where $L$ is the total space of an
$S^1$-bundle over $S^2$, or a finite quotient thereof.  On $L$ there
are a one-form $\eta$, a vector field $T$, and a $T$-invariant metric
$\gamma$ on $\ker\eta$ such that
\[
 \eta(T)=1,\qquad \iota_Td\eta=0,
\]
and the transverse metric $\gamma$ has constant curvature $+1$.
Extend $\gamma$ by $\gamma(T,\cdot)=0$ and set
\[
 b=dr^2+r^2\gamma+\eta^2.
\]
Then we say that a metric is ALF if it satisfies
\begin{equation}\label{eq:decay ALF}
    g=b+h \qquad \text{ with} \qquad|\nabla_b^jh|_b\leqslant C_j r^{-1-j}\qquad \text{ for }  r \geqslant R>0
\end{equation} 
for every $j\geqslant0$.
Cartan's formula gives $\mathcal L_T\eta=0$, hence $T$ is a unit
Killing field of $b$.  The orbits of $T$ are not required to be closed.

We first prove that the bounded Killing vector field $K$ is necessarily asymptotic to a multiple of the asymptotic vector field $T$.

\begin{lemma}\label{lem:bounded-K-asymptotics}
Let $g$ be nonflat and suppose $K$ is a nonzero bounded Killing
field.  After multiplying $K$ by a nonzero constant,
\begin{equation}\label{eq:bounded-K-expansion}
 |\nabla_b^j(K-T)|_b=\mathcal O(r^{-1-j}),\qquad j\geqslant0.
\end{equation}
In particular,
$|\nabla_b^j(V-1)|_b=\mathcal O(r^{-1-j})$ for $j\geqslant0$.

\end{lemma}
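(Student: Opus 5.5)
The plan is to first determine the leading asymptotics of $K$ on the end, then upgrade to full decay estimates via the elliptic equation satisfied by Killing fields. Since $K$ is Killing on a Ricci-flat metric, it satisfies $\nabla^*\nabla K=\operatorname{Ric}(K)=0$, and more usefully the second-order identity $\nabla^2K=R(\cdot,K)\cdot$ from the proof of \cref{lem:Killing-identities}. Because $K$ is bounded and $|\Rm|=\mathcal O(r^{-3})$ on an ALF end (from \eqref{eq:decay ALF}), interior elliptic estimates on balls of radius comparable to $1$ (the injectivity scale is bounded below on the circle-fibered model after passing to local covers) give $|\nabla^jK|=\mathcal O(1)$. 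The key quantitative step is then to show $|\nabla K|\to0$. Here I use the Bochner identity $-\tfrac12\Delta V=|\nabla K|^2$: $V$ is bounded and subharmonic. Integrating against cutoffs on the end, whose volume growth is $r^3$, gives $\int_{B_R}|\nabla K|^2\lesssim R$, which only controls $\nabla K$ in an averaged sense. To get pointwise decay, I expect to use instead the structure of the model: pulling back to local universal covers of annuli $\{R<r<2R\}$ and rescaling by $R^{-1}$, the metric converges to $\mathbb R^3\times S^1$ (of fiber length $\to0$ after rescaling) or rather, keeping unit scale, each unit ball converges to a flat $\mathbb R^3\times S^1$ piece; a bounded Killing field of the limit flat space $\mathbb R^3\times\R$ (covered) must be a bounded Killing field of Euclidean space, hence a translation, so $\nabla K\to0$ along the end.

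Next I identify which translation. On the flat limit $\R^3\times S^1$ obtained as limit of the genuine annuli (with the fiber of fixed length), a bounded Killing field is a constant combination $aT+v$ with $v$ tangent to $\R^3$. But the transverse directions rotate as one moves along the $S^2$ at infinity: globally on the end, the horizontal part $v$ would have to define a parallel section of the horizontal bundle of $b$ over the sphere $L/S^1$, which does not exist (the round $S^2$ carries no parallel vector field, and compatibility with $\mathcal L_K$ preserving the asymptotic structure forces $v=0$). Thus $K=aT+o(1)$, and $a\neq0$ since otherwise $K\to0$ at infinity; then $V\to0$, and the maximum principle for the subharmonic $V$ (with $V\to0$ and $V\geqslant0$) forces $V\equiv0$, contradicting $K\neq0$. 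Rescale so $a=1$.

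Finally I upgrade $K-T=o(1)$ to $\mathcal O(r^{-1-j})$ decay. Write $X=K-T$; since $T$ is Killing for $b$ and $g-b=\mathcal O(r^{-1})$ with derivatives, $\mathcal L_Tg=\mathcal O(r^{-2})$, so $\mathcal L_Xg=-\mathcal L_Tg=\mathcal O(r^{-2})$ and $\nabla^*\nabla X=\mathcal O(r^{-3})$. Using the Fredholm/indicial analysis of $\nabla^*\nabla$ on the ALF end (acting on $T$-invariant parts after averaging over fibers, and noting non-invariant Fourier modes decay exponentially), the solution $X=o(1)$ has an expansion whose first possible term is $c/r$ of harmonic type, yielding $|\nabla_b^jX|_b=\mathcal O(r^{-1-j})$; the estimate on $V-1=2g(T,X)+|X|^2+(g-b)(T,T)$ then follows directly. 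The main obstacle is the middle step: excluding a horizontal translation component and making the blow-down limit argument rigorous when the orbits of $T$ are not closed, where I would work on local covers and use the $\mathcal L_K$-invariance of the end structure rather than global fiber averaging.
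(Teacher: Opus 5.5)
\textbf{There is a genuine gap at the step where you exclude the horizontal component $v$.} The reasoning there cannot be right as written, because it never uses that $g$ is nonflat, and the lemma is false without that hypothesis. On the flat product $\R^3\times S^1$, which is ALF with $b=dr^2+r^2\gamma+d\theta^2$ and $T=\partial_\theta$, the field $K=\partial_{x_1}$ is a bounded Killing field, and $K-cT$ does not tend to $0$ for any $c$.

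Your obstruction misidentifies what $v$ is. A Euclidean translation restricted to the end is not a section tangent to the spheres $\{r=\text{const}\}$. It has a radial component, and it is parallel for the flat metric $dr^2+r^2\gamma$ on $\R^3\setminus B$. On each sphere it is a section of the trivial bundle $T\R^3|_{S^2}$, so the absence of parallel vector fields on the round $S^2$ is irrelevant. Nor does preservation of the asymptotic structure by the flow of $K$ forbid translations. This matches the expansion the paper imports from \cite[Proposition~B.2]{AADSNS}: $|K-cT-Y|\leqslant Cr^{-1}$ and $|\nabla K|\leqslant Cr^{-2}$, where $Y$ is a parallel translation of the Euclidean factor that asymptotic analysis alone cannot eliminate.

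\textbf{The missing idea is a global argument that uses nonflatness.}
\begin{enumerate}
\item Pick $p$ with $|\Rm_g|(p)>0$ and set $a=\frac12|\Rm_g|(p)$. Since curvature decays, $\mathcal C_a=\{|\Rm_g|\geqslant a\}$ is nonempty and compact.
\item The flow $\Phi_t$ of $K$ acts by isometries, so it preserves $\mathcal C_a$. Hence $d_g(\Phi_t(q),\mathcal C_a)$ is independent of $t$, and no orbit of $K$ can escape to infinity.
\item If $Y\neq0$, take $q$ far out in the direction of $Y$. The expansion above gives $\frac{d}{dt}r(\Phi_t(q))\geqslant|Y|/2$ as long as the orbit stays far out, so $r(\Phi_t(q))\to\infty$. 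This is a contradiction.
\end{enumerate}

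\textbf{A secondary weakness is in your unit-scale limit argument.} It only yields $|\nabla K|\to0$ along the end. That alone does not force $K$ to approach a single constant field $aT+v$ on the whole end: a bounded field with $|\nabla K|=o(1)$, for instance of size $1/r$, can still rotate slowly along the end. You need a rate such as $|\nabla K|=\mathcal O(r^{-2})$, together with the higher derivative estimates. The paper takes exactly this from \cite{AADSNS}, noting that the argument there does not use closed orbits, rather than running a separate Fredholm upgrade as you propose.
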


\begin{proof}

The proof of \cite[Proposition~B.2]{AADSNS} through
equation~(B.17) uses only the boundedness of $K$ and the ALF estimates,
not the assumption that its orbits are closed.  In the notation above
it gives
\begin{equation}\label{eq:expKilling}
    |K-cT-Y|\leqslant C r^{-1},\qquad\text{ and }\qquad |\nabla K|\leqslant C r^{-2} \qquad \text{ for } r \geqslant R>0
\end{equation}
where $Y$ is a parallel translation in the Euclidean component $dr^2 + r^2\gamma$ of the model at infinity.

We claim that $Y=0$, otherwise the flow of $K$ would translate the nonflat core of the manifold to infinity, where curvature decays to zero.  Choose $p\in M$ with
$|\Rm_g|(p)>0$ and set $a=\frac12|\Rm_g|(p)$.  Since
$|\Rm_g|\to0$ at infinity, the set
\[
 \mathcal C_a:=\{q\in M:|\Rm_g|(q)\geqslant a\}
\]
is nonempty and compact.  If $\Phi_t$ is the flow of $K$, then
$\Phi_t(\mathcal C_a)=\mathcal C_a$, since $\Phi_t$ is an isometry.
In particular, for any $q\in M$,
\[
 d_g\bigl(\Phi_t(q),\mathcal C_a\bigr)
 =d_g(q,\mathcal C_a)
\]
for every $q\in M$ and $t\in\mathbb R$.  Thus no orbit of $K$ can escape to infinity. 

Suppose that $Y\neq0$ and suppose that $q = A\frac Y{|Y|}$ is in the ALF chart at infinity, i.e. $r(q) = A>R$ to be chosen large enough. Then, since, up to an error of the size of the fiber, $r$ is the distance function on the model space, there is $C'>0$ depending on the geometry of $(M,g)$ but independent on $q$ such that
\begin{equation}\label{eq:radius and distance to Ca}
    r(\Phi_t(q))-C'\leqslant d_g\bigl(\Phi_t(q),\mathcal C_a\bigr)
 =d_g(q,\mathcal C_a)\leqslant  r(q)+C'.
\end{equation}
Now, from \eqref{eq:expKilling} and \eqref{eq:decay ALF} we see that when $r(\Phi_t(q)) > 100 (C_1+C+C')$, then $\frac{d}{dt}r(\Phi_t(q))\geqslant \frac{|Y|}{2}$, hence $r(\Phi_t(q)) \to +\infty$ as $t\to+\infty$ which contradicts \eqref{eq:radius and distance to Ca}. So $Y = 0$ and $K = c T + \mathcal{O}(r^{-1})$.

If $c=0$, then $V=|K|^2\to0$.  Since
$\Delta V=-|F|^2\leqslant0$, the maximum principle on $\{r\leqslant R\}$
followed by $R\to+\infty$ gives $V\equiv0$, which is a contradiction.
Thus $c\ne0$, and up to rescaling $K$ we may assume that $c=1$.  The derivative estimates in the proof of \cite[Proposition~B.2]{AADSNS} (which does not use the fact that the orbits of $K$ are closed), give \eqref{eq:bounded-K-expansion}.
\end{proof}

The rest of this section verifies the existence of $u$ and $\chi$ and their decay estimates.

\begin{lemma}\label{lem:scalar-end}
Assume the geometric hypotheses of \cref{thm:intro-bounded} and
normalize $K$ as above.  For every $0<\varepsilon<\tfrac12$ there is
a solution of
\begin{equation}\label{eq:u-Poisson}
 \Delta u=V+2, \text{ with}
\end{equation}
\begin{equation}\label{eq:u-asymptotics}
 u=-\tfrac{r^2}{2}+w,\qquad
 |\nabla_b^jw|=\mathcal{O}(r^{1+\varepsilon-j}),
\end{equation}
and the tensor $k_K$ of \eqref{eq:k-def} satisfies
\begin{equation}\label{eq:end-k-decay}
 |\nabla^jk_K|=\mathcal{O}(r^{-1+\varepsilon-j}),\qquad j=0,1,2.
\end{equation}
In particular, it satisfies the decay assumptions for dynamical destabilizations of ALF metrics from \cite[Definition 5.1]{KO}.
\end{lemma}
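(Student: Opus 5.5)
We construct $u$ as an explicit model solution on the end plus a decaying correction obtained from weighted Fredholm theory. First compute the model: on $b=dr^2+r^2\gamma+\eta^2$ the function $-\tfrac{r^2}{2}$ has $\nabla_b^2(-\tfrac{r^2}{2})=-(dr^2+r^2\gamma)$, so $\Delta_b(-\tfrac{r^2}{2})=3$. Using \eqref{eq:decay ALF} and \cref{lem:bounded-K-asymptotics}, we get $\Delta_g(-\tfrac{r^2}{2})=3+\mathcal O(r^{-1})$ and $V+2=3+\mathcal O(r^{-1})$. Extending $-\tfrac{r^2}{2}\chi_{\mathrm{end}}$ by a cutoff, the error is
\[
f:=V+2-\Delta_g\bigl(-\tfrac{r^2}{2}\chi_{\mathrm{end}}\bigr),
\]
which is smooth and satisfies $|\nabla_b^jf|=\mathcal O(r^{-1-j})$.

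Next, solve $\Delta w=f$ with $w=\mathcal O(r^{1+\varepsilon})$. The scalar Laplacian on an ALF end is modelled on the $\R^3$ Laplacian acting on $T$-invariant functions, plus the fiber modes, which are exponentially controlled and invertible. In weighted Hölder spaces $C^{k,\alpha}_\delta$ with weight $r^{\delta}$, the operator $\Delta:C^{2,\alpha}_{\delta}\to C^{0,\alpha}_{\delta-2}$ is Fredholm for $\delta$ avoiding the indicial roots $\{j,-1-j\}$ of $\R^3$. We take $\delta=1+\varepsilon\in(1,\tfrac32)$, which is nonexceptional. By duality with $\delta^*=-1-\delta-(-2)\dots$, i.e. with the weight $-2-\varepsilon$ (below $-1$ and nonexceptional), the cokernel is the $L^2$-type kernel of $\Delta$ at that weight. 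That kernel consists of harmonic functions decaying at infinity, so it vanishes by the maximum principle. Surjectivity therefore gives $w\in C^{2,\alpha}_{1+\varepsilon}$ with $f\in C^{0,\alpha}_{-1}\subset C^{0,\alpha}_{-1+\varepsilon}$. Higher derivative bounds $|\nabla_b^jw|=\mathcal O(r^{1+\varepsilon-j})$ follow from scaled Schauder estimates on balls of radius $\sim r$ in the $\R^3$ quotient picture.

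\textbf{Main obstacle.} The key difficulty is the decay of $k_K$: naively $\nabla^2u=-(dr^2+r^2\gamma)+\mathcal O(r^{-1+\varepsilon})$ only up to metric corrections, and the leading terms must cancel. We compute $\nabla_g^2(-\tfrac{r^2}{2})$. Its $b$-part is $-(g_{\R^3})=-(b-\eta^2)$, and the correction from $\nabla_g-\nabla_b=\mathcal O(r^{-2})$ applied to $d(r^2/2)=r\,dr$ is $\mathcal O(r^{-1})$. For the remaining terms, $\kappa\otimes\kappa=\eta^2+\mathcal O(r^{-1})$ and $\tfrac{V+1}{2}g=b+\mathcal O(r^{-1})$. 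Hence
\[
k_K=-(b-\eta^2)-\eta^2+b+\nabla^2w+\mathcal O(r^{-1})=\mathcal O(r^{-1+\varepsilon}).
\]
Derivatives are handled identically using the $j$-th order bounds. If the Fredholm step fails at weight $1+\varepsilon$ because of $T$-dependent modes, we would first split $f$ into its $T$-average and the remainder, then solve the remainder fiberwise with rapid decay.

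Finally, the conditions of \cite[Definition~5.1]{KO} are decay $r^{-1+\varepsilon}$ with $\varepsilon<\tfrac12$, which on volume growth $r^3$ gives finite $\cQ_g$ and vanishing cutoff errors. Both follow from \eqref{eq:end-k-decay}.
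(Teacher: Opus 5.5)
Your proposal is correct and follows essentially the paper's route. You use the model function $-r^2/2$ with $\Delta_b(-r^2/2)=3$, and get surjectivity of $\Delta_g$ at growth rate $r^{1+\varepsilon}$ from weighted Fredholm theory, since the cokernel consists of harmonic functions decaying like $r^{-2-\varepsilon}$ and the maximum principle kills them. The decay of $k_K$ then comes from the cancellation $\nabla_b^2(-r^2/2)-\eta\otimes\eta+b=0$. The only blemish is the garbled dual-weight expression, which should read $-\delta-1=-2-\varepsilon$; your explicit cutoff extension of $-r^2/2$ to a global function is a harmless tightening of the paper's wording.
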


\begin{proof}
On either model, $\Delta_b(-r^2/2)=3$, and
for every ALF model metric,
\begin{equation}\label{eq:model-cancellation}
 \nabla_b^2(-r^2/2)-\eta\otimes\eta+b=0.
\end{equation}
Combined with the
order-one hypothesis on $g$ and \eqref{eq:bounded-K-expansion}, the
error
\[
 f:=(V+2)-\Delta_g(-r^2/2)
\]
lies in the space $C^{0,\alpha}_{1-\varepsilon}$ of tensors decaying like
$r^{-1+\varepsilon}$.

Since $-1-\varepsilon$ is not an exceptional weight,
\cite[Theorem~2.25 and Corollary~2.28]{KO} shows that
$\Delta_g:
 C^{2,\alpha}_{-1-\varepsilon}(M)\longrightarrow C^{0,\alpha}_{1-\varepsilon}(M)$ is Fredholm.  Moreover,
\cite[(2.26)]{KO} identifies its cokernel with the kernel of
 $\Delta_g:C^{2,\alpha}_{2+\varepsilon}(M)
 \longrightarrow C^{0,\alpha}_{4+\varepsilon}(M).$

Functions in
$C^{2,\alpha}_{2+\varepsilon}$ are
$\mathcal O(r^{-2-\varepsilon})$, tend to zero at infinity, hence vanish by maximum principle.  Consequently, the operator
\[
 \Delta_g:C^{2,\alpha}_{-1-\varepsilon}(M)
 \longrightarrow C^{0,\alpha}_{1-\varepsilon}(M)
\]
is surjective, and one can solve $\Delta_gw=f$ with
 $w\in C^{2,\alpha}_{-1-\varepsilon}(M)$ uniquely, modulo the finite-dimensional kernel of $\Delta_g$. This kernel is nontrivial since it contains constants, and possibly asymptotically linear functions. The different choices of $u$ will not modify our argument since Hessians of harmonic functions lie in $\ker\mathcal{E}$ (see \eqref{eq:E-natural}) and are integrated by parts against a divergence-free term in the computation of the quadratic term in \eqref{eq:energy-pairing}. 

Combining \eqref{eq:u-asymptotics} and the identity
\eqref{eq:model-cancellation}
gives \eqref{eq:end-k-decay}. 
\end{proof}

\begin{remark}\label{rem:no-torus}
If $g$ is toric, one can average the solution $u$ over the compact torus to obtain an invariant $u$, this simplifies the analysis on AF$_\beta$ manifolds for instance but is not used here. 
\end{remark}

\begin{lemma}\label{lem:chi-existence}
Under the same hypotheses, there is a smooth global function
$\chi$ on $M$ with $d\chi=\iota_K(*F)$ and
\begin{equation}\label{eq:chi-asymptotics}
 |\nabla_b^j\chi|=\mathcal{O}(r^{-1+\varepsilon-j}),\qquad \text{ for }j\geqslant0,
\end{equation}
which gives $|\nabla_b^j(V\pm\chi-1)|_b
 =O(r^{-1+\varepsilon-j}),$ for $ j\geqslant0$ at infinity. 
\end{lemma}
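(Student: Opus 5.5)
The plan is to first show that the one-form $\beta:=\iota_K(*F)$ is closed, then integrate it on the ALF end, and finally use the end topology to get a global primitive.

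\textbf{Closedness.} By Cartan's formula,
\[
d\beta=\mathcal L_K(*F)-\iota_K d(*F).
\]
We have $d(*F)=\pm*\delta F=0$ by \eqref{eq:KM}. Moreover $\mathcal L_K(*F)=*\mathcal L_KF=*d\mathcal L_K\kappa=0$, because $K$ is Killing, so its flow preserves $g$ and (on the identity component) the orientation. Hence $d\beta=0$.

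\textbf{Estimates for $\beta$.} By \cref{lem:bounded-K-asymptotics} and the estimate $|\nabla_b^jF|_b=\mathcal O(r^{-2-j})$, we get $|\nabla_b^j\beta|_b=\mathcal O(r^{-2-j})$.

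\textbf{Exactness.} I would prove that the de Rham class $[\beta]\in H^1(M)$ vanishes, by reducing to the end and then to the core.

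First, on the end $(A,\infty)\times L$, the class of $\beta$ is determined by its restriction to a slice $\{r\}\times L$. Its periods along any loop $\gamma$ in $\{r\}\times L$ are independent of $r$, since $\beta$ is closed. A loop in the base $S^2$ direction can be chosen of length $\mathcal O(r)$, so its period is bounded by $\mathcal O(r\cdot r^{-2})\to0$. The fibre loop, a $T$-orbit of length about $2\pi$, has period bounded by $\mathcal O(r^{-2})\to0$. Quotient loops are handled the same way, since $L$ is a finite quotient and has $H_1(L;\R)$ generated by these. So all periods on the end vanish. On the end I then define $\chi(x)=-\int_x^\infty\beta$ along radial rays; this is independent of the choice because the periods vanish, and it is $\mathcal O(r^{-1})$. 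This is even better than the claimed $r^{-1+\varepsilon}$.

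Second, for loops in the core, I would use a Killing-specific argument. Since $\mathcal L_K(*F)=0$ and $\iota_K\beta=*F(K,K)=0$, the periods of $\beta$ are invariant under the flow of $K$. For a closed loop $\gamma$, consider the map $\Gamma:(s,t)\mapsto\Phi_t(\gamma(s))$. The difference of periods of $\beta$ along $\gamma$ and $\Phi_t\circ\gamma$ equals $\int\Gamma^*d\beta=0$, which is consistent but gives no new information.

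The real tool, I expect, is the one used in the Ernst-potential literature (\cite{AADSNS}): on $\{K\neq0\}$, the quotient structure and the Killing flow allow one to view $\beta$ as a form pulled back from the orbit space, where loops can be contracted. A simpler route is to work instead with the combinations $\iota_KF^\pm=-\tfrac12 d(V\mp\chi)$ and note that $\beta=\iota_KF+2\iota_KF^-$. I am not fully sure this is cleaner, so my first attempt is the following Hodge-theoretic argument.

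Write $\beta=d\chi_0+\eta$ with $\eta$ an $L^2$ harmonic one-form; this is possible because $\beta\in L^2$ given the decay $r^{-2}$ and volume growth $r^3$. On ALF Ricci-flat manifolds, $L^2$ harmonic one-forms vanish by Bochner: $\Delta|\eta|^2\le0$, $\eta$ is in $L^2$, and integration by parts gives $\nabla\eta=0$, so $\eta=0$ since $|\eta|\to0$. Hence $\beta=d\chi_0$ globally. The step of making this rigorous in weighted spaces, with the Fredholm theory of \cite{KO}, is the main obstacle. Alternatively, if one only needs a local primitive, simple connectivity in the Li--Sun case gives it for free.

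\textbf{Normalization and decay.} Adjust the additive constant so that $\chi\to0$ at infinity; this is well defined because $\chi$ is constant along the fibre direction up to $\mathcal O(r^{-1})$. The derivative estimates follow from $d\chi=\beta$. Combined with $V-1=\mathcal O(r^{-1})$ from \cref{lem:bounded-K-asymptotics}, this gives $|\nabla_b^j(V\pm\chi-1)|_b=\mathcal O(r^{-1+\varepsilon-j})$ at infinity.
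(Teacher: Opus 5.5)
Your closedness computation is correct, and so is your argument on the end. You do not need closed $T$-orbits, which fail to exist for $\mathrm{AF}_\beta$ with irrational $\beta$: every class in $H_1(\{r\}\times L;\mathbb{R})$ has a representative of length $\mathcal O(r)$. So the periods on the end vanish, and radial integration gives a primitive there with the sharper bound $\mathcal O(r^{-1})$. Your Bochner vanishing of $L^2$ harmonic one-forms is also the paper's last step, where it invokes \cite[Lemma~1.7]{BiquardGauduchon}.

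The gap is the global decomposition $\beta=d\chi_0+\eta$. This is the whole content of the lemma, and you leave it open. It does not follow just from $\beta\in L^2$. The Hodge--Kodaira decomposition only gives $\beta=\eta+\alpha$, with $\eta$ closed, coclosed and $L^2$, and $\alpha$ in the $L^2$-closure of $dC^\infty_c(M)$. Since the range of $d$ is not closed in $L^2$, $\alpha$ is not a priori of the form $d\chi_0$.

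Your other attempts do not reach the core either. The flow-invariance of periods is vacuous, as you note. Rewriting $\beta$ through $\iota_KF^\pm$ is circular, because $\iota_KF=-dV$ is already exact. Also, the correct formula is $\beta=\iota_KF-2\iota_KF^-$, not $+$.

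The paper supplies exactly this step. It uses $\delta\beta=|F^+|^2-|F^-|^2=\mathcal O(r^{-4})$; the computation in \cref{lem:scalar-equations} does not need $\chi$ to exist. It then solves $\Delta_g\chi=\delta_g\beta$ with $\chi=\mathcal O(r^{-1+\varepsilon})$, using the same weighted Fredholm theory of \cite{KO} as for $u$. As there, the relevant cokernel consists of harmonic functions tending to zero at infinity, so it is trivial. Then $\beta-d\chi$ is closed, coclosed and in $L^2$, hence zero.

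If you prefer to keep your $L^2$ decomposition, you can close it without weighted spaces. The form $\alpha=\beta-\eta$ is smooth and closed. For every closed $\omega\in\Omega^3_c(M)$, $L^2$-continuity gives $\int_M\alpha\wedge\omega=\lim_n\int_M d(f_n\,\omega)=0$. Hence $[\alpha]=0$ by Poincar\'e duality $H^1(M)\cong H^3_c(M)^*$.

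With either addition your proof is complete. One small fix remains: conclude $\eta=0$ from $\nabla\eta=0$, $\eta\in L^2$ and infinite volume, not from the unproven claim $|\eta|\to0$.
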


\begin{proof}
The one-form $\iota_K(*F)$ is closed (Cartan) and satisfies
$|\iota_K(*F)|=\mathcal{O}(r^{-2+\varepsilon})$. We find $\chi$ by solving
$\Delta_g\chi=\delta_g(\iota_K(*F))$ in $C^{2,\alpha}_{1-\varepsilon}$ using the same Fredholm theory as in the previous proof. Then the $1$-form $\gamma= \iota_K(*F)-d\chi$ is harmonic, coclosed, and $L^2$, which forces it to vanish by \cite[Lemma~1.7]{BiquardGauduchon}. 
\end{proof}

Combining the estimates gives
\begin{equation}\label{eq:end-basic-decay}
 V=1+\mathcal{O}(r^{-1+\varepsilon}),\qquad
 F=\mathcal{O}(r^{-2+\varepsilon}),\quad \text{ and } \quad
 \chi=\mathcal{O}(r^{-1+\varepsilon}),
\end{equation}
and the three boundary terms appearing in
\cref{thm:energy-identity,prop:mixed-Ernst},
\begin{equation}\label{eq:end-fluxes}
 \int_{\partial M_R}|k_K|\,|\nabla k_K|\,dA,\quad
 \int_{\partial M_R}|(F^+\circ F^-)(\nabla u,\nu)|\,dA,\quad
 \int_{\partial M_R}|1-V\pm\chi|\,|\partial_\nu(V\pm\chi)|\,dA,
\end{equation}
are all $\mathcal{O}(R^{-1+2\varepsilon})\to0$ for
$\varepsilon\in(0,\tfrac12)$.

\begin{remark}
\label{subsec:higher-ALF-analysis}
We finally record the higher dimensional controls from \cite{KO}. Let $\tau$ be the rate in \eqref{eq:higher-ALF-decay} and put
$u_0=-r^2/2$.  The identities
$a_n+\lambda_n=n-1$ and
$b_n+\lambda_n/n=1$ give
$a_nV+\lambda_n-\Delta_gu_0=\mathcal O(r^{-\tau})$.
For
\[
 \frac{n-3}{2}<\mu<\min\{\tau,n-1\},
 \qquad \text{ and }\qquad \mu-2\ \text{nonexceptional},
\]
the weighted theory of \cite[Corollary~2.28]{KO} gives $u=u_0+w$ with
\[
 \nabla_b^jw=\mathcal O(r^{2-\mu-j}),\qquad \text{ and }\qquad 
 \nabla_b^jk=\mathcal O(r^{-\mu-j}),\qquad 0\leqslant j\leqslant2.
\]
Since $\operatorname{Area}(\partial M_R)=\mathcal O(R^{n-2})$, the cutoff
and boundary errors are
\[
 \mathcal O(R^{n-3-2\mu})
 +\mathcal O(R^{n-3-\mu-\tau})
 +\mathcal O(R^{n-3-2\tau})=o(1).
\]
Thus $\cQ_g(k)$ is finite and
\begin{equation}
   \cQ_g(\chi_Rk)\xlongrightarrow[R\to+\infty]{} \cQ_g(k).\label{eq:cutoff-spinor}
\end{equation}
\end{remark}

\end{document}